\numberwithin{equation}{section}
\theoremstyle{definition}
\newtheorem{defi}{Definition}
\newtheorem{theo}{Theorem}
\newtheorem{lemm}{Lemma}
\newtheorem{prop}{Proposition}
\newtheorem{cor}{Corollary}
\newtheorem{ex}{Example}
\newtheorem{rem}{Remark}
\newcommand{\sslash}{{/\mkern-6mu/}}
\def\ad{{\rm ad}}
\def\Aut{{\rm Aut}}
\def\U{{\rm U}}
\def\GL{{\rm GL}}
\def\Exp{{\rm Exp}}
\def\Exp{{\rm Exp}}
\def\Im{{\rm Im}}
\def\Sym{{\rm Sym}}
\def\Vol{{\rm Vol}}
\def\R{{\mathbb R}}
\def\Z{{\mathbb Z}}
\def\C{{\mathbb C}}
\def\N{{\mathbb N}}
\def\Q{{\mathbb Q}}
\def\g{\mathfrak{g}}
\def\h{\mathfrak{h}}
\def\s{\mathfrak{s}}
\def\M{{\cal M}}
\def\inum{{\sqrt{-1}}}
\def\vol{{d\mu_{g_M}}}
\def\gM{{g_M}}
\begin{document}
\title {Generalized Kazdan-Warner equations associated with a linear action of a torus on  a complex vector space}
\author {Natsuo Miyatake}
\date{}
\maketitle
\begin{abstract}
We introduce generalized Kazdan-Warner equations on Riemannian manifolds associated with a linear action of a torus on a complex vector space. We show the existence and the uniqueness of the solution of the equation on any compact Riemannian manifold. As an application, we give a new proof of a theorem of Baraglia \cite{Bar1} which asserts that a cyclic Higgs bundle gives a solution of  the periodic Toda equation.
\end{abstract}

\section{Introduction}
\label{intro}
Let $K$ be a closed connected subtorus of a real torus $T^d:=\U(1)^d$ with the Lie algebra $k\subseteq t^d$. We denote by $\iota^\ast: (t^d)^\ast \rightarrow k^\ast$ the dual map of the inclusion map $\iota: k\rightarrow t^d$. Let $u_1,\dots, u_d$ be a basis of $t^d$ defined by
\begin{align*}
u_1\coloneqq&(\inum, 0, \dots, 0), \\
u_2\coloneqq&(0, \inum, 0,\dots, 0), \\
&\cdots \\
u_d\coloneqq&(0,\dots, 0,\inum).
\end{align*}
 We denote by $u^1, \dots, u^d\in (t^d)^\ast$ the dual basis of $u_1, \dots, u_d$. Let $(\cdot, \cdot)$ be the metric on $t^d$ and $(t^d)^\ast$ satisfying
\begin{align*}
(u_i, u_j)=(u^i, u^j)=\delta_{ij} \ \text{for all $i, j$},
\end{align*}
where $\delta_{ij}$ denotes the Kronecker delta. Let $(M, \gM)$ be a Riemannian manifold. We denote by $\Delta_{g_M}$ the geometric Laplacian $d^\ast d$. In this paper, we introduce the following equation on $M$:
\begin{align}\label{generalized KW equation}
\Delta_{g_M}\xi+\sum_{j=1}^d a_je^{(\iota^\ast u^j, \xi)}\iota^\ast u^j=w,
\end{align}
where $\xi$ is a $k^\ast$-valued function on $M$ which is the solution of (\ref{generalized KW equation}) for given nonnegative functions $a_1, \dots, a_d$ and a $k^\ast$-valued  function $w$.  We give some examples of equation (\ref{generalized KW equation}).
\begin{ex}
Let $d=1$, $K=\U(1)$. Then equation (\ref{generalized KW equation}) is the Kazdan-Warner equation \cite{KW1}:
\begin{align*}
\Delta_{g_M} f+he^f=c.
\end{align*}
It should be noted that in \cite{KW1} the sign of a given function $h$ is not assumed to be nonnegative.
\end{ex}
\begin{ex}
Let $K$ be a connected subtorus of $T^d$ which is defined as $K\coloneqq \{(g_1, \dots, g_d)\in T^d\mid g_1\cdots g_d=1\}$. We consider equation (\ref{generalized KW equation}) on an open subset $U$ of the complex plane $\C\simeq\R^2$ with the standard metric $g_{\R^2}\coloneqq dx\otimes dx+dy\otimes dy$. The Laplacian $\Delta_{g_{\R^2}}$ of the standard metric $g_{\R^2}$ is given as follows:
\begin{align*}
\Delta_{g_{\R^2}}=-\left(\frac{\partial^2}{\partial^2 x}+\frac{\partial^2}{\partial^2 y}\right)=-4\frac{\partial^2}{\partial z. \partial \bar{z}}.
\end{align*}
We set $a_1=\cdots =a_d=4, \ w=0$. Then equation (\ref{generalized KW equation}) is the following:
\begin{align}
-\frac{\partial^2}{\partial z \partial \bar{z}}\xi+\sum_{j=1}^de^{(\iota^\ast u^j, \xi)}\iota^\ast u^j=0. \label{Toda-type}
\end{align}
We show that equation (\ref{Toda-type}) is equivalent to the two-dimensional periodic Toda lattice with opposite sign \cite{GL1}. We first define a surjection $\pi: T^d \longrightarrow K $ by
\begin{align*}
\pi(g_1,\dots, g_d)=(g_1^{-1}g_2, g_2^{-1}g_3, \dots, g_d^{-1}g_1).
\end{align*}
The derivative $\pi_\ast: t^d\rightarrow k$ of the map $\pi$ is 
\begin{align*}
\pi_\ast(u_j)=-u_j+u_{j-1} \ \text{for $j=1,\dots, d$},
\end{align*}
where we denote by $u_0$ the vector $u_d$. Let $(\pi_\ast)^\ast:k^\ast\rightarrow (t^d)^\ast$ be the adjoint of the derivative $\pi_\ast$. Then one can check that the following holds for each $j$:
\begin{align*}
(\pi_\ast)^\ast(\iota^\ast u^j)=u^{j+1}-u^j.
\end{align*}
By using the adjoint $(\pi_\ast)^\ast$, we identify $k^\ast$ with a subset $\{\theta_1u^1+\cdots+\theta_d u^d\in (t^d)^\ast\mid\theta_1+\cdots +\theta_d=0\}$ of $(t^d)^\ast$. A $k^\ast$-valued function $\xi:U\rightarrow k^\ast$ is then identified with real valued functions $\xi_1,\dots, \xi_d$ satisfying $\xi_1+\cdots+\xi_d=0$. Further under this identification, equation (\ref{Toda-type}) is equivalent to the following:
\begin{align}
\sum_{j=1}^d \left\{\frac{\partial^2}{\partial z \partial \bar{z}}\xi_j+e^{\xi_{j+1}-\xi_j}-e^{\xi_j-\xi_{j-1}}\right\}u^j=0. \label{toda equation}
\end{align}
Equation (\ref{toda equation}) is known as the two-dimensional periodic Toda lattice with opposite sign \cite{GL1}.
\end{ex}
Therefore equation (\ref{generalized KW equation}) can be considered as a generalization of the above examples. We call equation (\ref{generalized KW equation}) {\it generalized Kazdan-Warner equation.} We solve equation (\ref{generalized KW equation}) on any compact Riemannian manifold under the following assumption on $a_1, \dots, a_d$: 
\begin{enumerate}[($\ast$)]
\item \label{coefficients}  For each $j\in J_a$, $a_j^{-1}(0)$ is a set of measure $0$ and $\log a_j$ is integrable,
\end{enumerate} 
where $J_a$ denotes $\{j\in\{1, \dots, d\}\mid \text{$a_j$ is not identically 0}\}$.
Note that if $M$ is a complex manifold with a holomorphic hermitian bundle $(E, h) \rightarrow M$, then $a_1=|\Phi_1|^2, \dots, a_d=|\Phi_d|^2$ satisfy condition $(\ast)$ for any holomorphic sections $\Phi_1, \dots, \Phi_d$ of $E$. Our main theorem is the following:
\begin{theo}\label{main theorem}
{\it 
Let $(M, g_M)$ be an $m$-dimensional compact connected Riemannian manifold. We take non-negative $C^\infty$ functions $a_1, \dots, a_d $ and  a $k^\ast$-valued $C^\infty$ function $w$. Assume $a_1, \dots, a_d$ satisfy condition $(\ast)$. Then the following (1) and (2) are equivalent:
\begin{enumerate}[(1)]
\item \label{GKW equation has a solution}The generalized Kazdan-Warner equation has a $C^\infty$ solution $\xi:M\rightarrow k^\ast$:
\begin{align}\label{KWequation}
\Delta_{g_M}\xi+\sum_{j=1}^d a_je^{(\iota^\ast u^j, \xi)}\iota^\ast u^j=w;
\end{align}
\item \label{wbar is in the interior of the cone}
The given functions $a_1, \dots, a_d$ and $w$ satisfy
\begin{align}
\int_M w \ \vol\in\sum_{j\in J_a}\R_{>0}\iota^\ast u^j, \label{an important condition}
\end{align}
where $\mu_{g_M}$ denotes the measure induced by $g_M$.
\end{enumerate}
Moreover if $\xi$ and $\xi^\prime$ are $C^\infty$ solutions of equation (\ref{KWequation}), then $\xi-\xi^\prime$ is a constant which is in the orthogonal complement of $\sum_{j\in J_a}\R\iota^\ast u^j$.}
\end{theo}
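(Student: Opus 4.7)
The plan is to handle (1) $\Rightarrow$ (2) by direct integration, (2) $\Rightarrow$ (1) by the direct method of the calculus of variations, and uniqueness by a monotonicity argument. The direction (1) $\Rightarrow$ (2) is straightforward: integrating (\ref{KWequation}) over compact $M$ makes the Laplacian term vanish and leaves
$$\int_M w\,d\mu_{g_M}=\sum_{j\in J_a}\Bigl(\int_M a_j e^{(\iota^\ast u^j,\xi)}\,d\mu_{g_M}\Bigr)\iota^\ast u^j,$$
where each coefficient is strictly positive by hypothesis $(\ast)$, which is exactly (\ref{an important condition}).

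For (2) $\Rightarrow$ (1), I would minimize the convex functional
$$F(\xi)=\int_M\Bigl(\tfrac12|d\xi|^2+\sum_{j\in J_a}a_j e^{(\iota^\ast u^j,\xi)}-(w,\xi)\Bigr)\,d\mu_{g_M},$$
whose formal Euler--Lagrange equation is (\ref{KWequation}). Set $V:=\sum_{j\in J_a}\R\iota^\ast u^j\subseteq k^\ast$; since $F$ is invariant under constant shifts of $\xi$ in $V^\perp$, I would first split $k^\ast=V\oplus V^\perp$ and decouple the $V^\perp$-component of the equation into the linear Poisson problem $\Delta_{g_M}\xi_{V^\perp}=w_{V^\perp}$, which is solvable precisely because (\ref{an important condition}) forces $\int_M w_{V^\perp}\,d\mu_{g_M}=0$. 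The remaining task is to minimize $F$ on $H^1(M;V)$ and bootstrap regularity of the minimizer.

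The main obstacle, where both condition $(\ast)$ and the cone condition (\ref{an important condition}) are essential, is coercivity of $F$ on $H^1(M;V)$. I would write $\xi=c+\tilde\xi$ with $c\in V$ constant and $\int_M\tilde\xi\,d\mu_{g_M}=0$; the $\tilde\xi$-dependence is controlled by Poincar\'e, Cauchy--Schwarz, and the nonnegativity of the exponential terms. For the $c$-dependence, Jensen's inequality applied to $\log a_j+(\iota^\ast u^j,\tilde\xi)$ (where the integrability of $\log a_j$ is used) gives the uniform lower bound
$$\int_M a_j e^{(\iota^\ast u^j,\tilde\xi)}\,d\mu_{g_M}\;\geq\;\Vol(M)\exp\Bigl(\tfrac{1}{\Vol(M)}\textstyle\int_M\log a_j\,d\mu_{g_M}\Bigr)\;=:\;C_j\;>\;0,$$
reducing coercivity to that of the convex function $g(c):=\sum_{j\in J_a}C_j e^{(\iota^\ast u^j,c)}-(\bar w,c)$ on $V$, where $\bar w:=\int_M w\,d\mu_{g_M}$. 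Expanding $\bar w=\sum_{j\in J_a}\beta_j\iota^\ast u^j$ with all $\beta_j>0$ via (\ref{an important condition}), a ray argument along $c=t\hat c$ with $|\hat c|=1$ shows $g$ is coercive: either some $(\iota^\ast u^j,\hat c)>0$ (exponential blow-up) or every such pairing is $\leq 0$ with at least one strictly negative, so $-(\bar w,\hat c)>0$ (linear blow-up). Convexity yields weak lower semi-continuity, so a minimizer exists in $H^1(M;V)$; standard elliptic bootstrapping upgrades it to $C^\infty$.

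For uniqueness, if $\xi,\xi'$ both solve (\ref{KWequation}) and $\eta:=\xi-\xi'$, pairing the difference equation with $\eta$ pointwise and integrating gives
$$\int_M|d\eta|^2\,d\mu_{g_M}+\sum_{j\in J_a}\int_M a_j\bigl[e^{(\iota^\ast u^j,\xi)}-e^{(\iota^\ast u^j,\xi')}\bigr](\iota^\ast u^j,\eta)\,d\mu_{g_M}=0.$$
The monotonicity $(e^s-e^t)(s-t)\geq 0$ makes every summand nonnegative, so each vanishes: the first forces $\eta$ to be constant, and the others, combined with $a_j>0$ almost everywhere for $j\in J_a$, force $(\iota^\ast u^j,\eta)=0$ for all $j\in J_a$, i.e., $\eta\in V^\perp$.
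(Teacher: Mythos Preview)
Your overall strategy---minimize the same convex energy functional, use Jensen's inequality together with the cone condition (\ref{an important condition}) to obtain coercivity, and deduce uniqueness from monotonicity/convexity---matches the paper's approach closely. The decomposition $k^\ast=V\oplus V^\perp$ and the separate treatment of the linear Poisson problem on $V^\perp$ is a clean simplification; the paper instead just assumes $(\iota^\ast u^j)_{j\in J_a}$ spans $k^\ast$ ``for simplicity''. Your uniqueness argument via the monotonicity inequality $(e^s-e^t)(s-t)\geq 0$ is essentially the integrated form of the paper's second-variation computation (Lemma~\ref{convexity}); both are correct.

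The substantive difference is in how existence is obtained. You run the direct method in $H^1(M;V)$ and then invoke ``standard elliptic bootstrapping''. The paper instead follows Bradlow's scheme: it works in $L^{2m}_3(M,k^\ast)\hookrightarrow C^0$, restricts $E$ to the sublevel set $\{\,|F(\xi)|_{L^{2m}_1}^{2m}\le B\,\}$ with $F(\xi)=\Delta_{g_M}\xi+\sum a_j e^{(\iota^\ast u^j,\xi)}\iota^\ast u^j-w$, shows a constrained minimizer is automatically an unconstrained critical point (Lemma~\ref{local minimum}), and proves minimizing sequences are bounded in $L^{2m}_3$ via the pointwise inequality $y\le ye^y$ combined with a sup-estimate of L\"ubke--Teleman type (Lemma~\ref{known lemma}). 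This machinery looks heavier, but it buys an a~priori $C^0$ bound on the minimizing sequence that your argument lacks.

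That missing $C^0$ control is where your bootstrapping step is too quick when $m\geq 3$. From $F(\xi_0)<\infty$ you only know $a_j e^{(\iota^\ast u^j,\xi_0)}\in L^1$, and since $a_j$ may vanish to high order on sets of positive measure (condition $(\ast)$ only gives $\log a_j\in L^1$), this does not give $e^{(\iota^\ast u^j,\xi_0)}\in L^p$ for any $p>1$. Thus the Euler--Lagrange equation reads $\Delta_{g_M}\xi_0\in L^1$, and $L^1$ elliptic regularity does not embed into $L^\infty$ in dimension $m\geq 3$, so the bootstrap cannot start. In dimension $m\le 2$ your argument is complete (Moser--Trudinger handles $m=2$), but for general $m$ you need either a direct $L^\infty$ bound on the $H^1$ minimizer or a device like the paper's constrained minimization to produce one.
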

\begin{rem}
We comment on the subtorus $K$. The closed connected subtorus $K$ is isomorphic to a real torus $T^n$. Therefore one can consider that our equation (\ref{generalized KW equation}) is associated with an embedding $i: T^n\rightarrow T^d$. More generally, for a homomorphism $\tau: T^{n^\prime}\rightarrow T^d$, we can define a differential equation in the same way as the definition of equation (\ref{generalized KW equation}). However, it should be noted that for given two homomorphisms: $\tau_1:T^{n_1}\rightarrow T^d$ and $\tau_2:T^{n_2}\rightarrow T^d$, if their image coincides: $\Im \tau_1=\Im \tau_2$, then equations which are associated with $\tau_1$ and $\tau_2$ are equivalent. Therefore essentially, it is enough to consider equations associated with a closed connected subtorus $K$ of $T^d$. 
\end{rem}
\begin{rem}
We mention the definition of  our equation (\ref{generalized KW equation}). Let $\Exp:t^d\rightarrow T^d$ be the exponential map which is defined as
\begin{align*}
\Exp(v)=(e^{\inum\langle v, u^1\rangle}, \dots, e^{\inum\langle v, u^d\rangle}) \ \text{for $v\in t^d$}, 
\end{align*}
where we denote by $\langle \cdot, \cdot\rangle$ the natural coupling of $t^d$ and $(t^d)^\ast$. We denote by $t^d_\Z$ a lattice $\ker\Exp$ of $t^d$. Then the subtorus $K$ defines a sublattice $k_\Z\coloneqq \ker\Exp\left.\right|_k$ of the lattice $t^d_\Z$. Conversely, for a given sublattice $k_\Z$ of $t_\Z^d$ we can define a closed connected subtorus $K$ of $T^d$ as $K\coloneqq \Exp(k_\Z\otimes\R)$. Given two sublattices $k_\Z$ and $k^\prime_\Z$ define the same subtorus if and only if they give the same rational subspace: $k_\Z\otimes\Q=k^\prime_\Z\otimes\Q$. Therefore we see that to give a closed connected subtorus $K$ of $T^d$ is equivalent to give a rational subspace of $t_\Z^d\otimes\Q$. Hence our equation (\ref{generalized KW equation}) can be considered as an equation which is associated with a rational subspace of $t_\Z^d\otimes \Q$. We note that our generalized Kazdan-Warner equation (\ref{generalized KW equation}) can be defined not only for a rational subspace of $t^d_\Z\otimes\Q$ but also for any real vector subspace of $t^d$. More specifically, for a given real vector subspace $V\subseteq t^d$, by using the dual of the inclusion $\iota: V\rightarrow t^d$ we can define a differential equation in the same way as the definition of equation (\ref{generalized KW equation}). Further it should be remarked that our Theorem \ref{main theorem} also holds for equation (\ref{generalized KW equation}) which is associated with any real vector subspace $V$ of $t^d$. Finally, we remark that to give an embedding of real vector space $\R^n\rightarrow t^d$ is equivalent to give $d$-generators $v_1,\dots, v_d$ of $(\R^n)^\ast$. This is because the generators $v_1,\dots, v_d$ defines a linear surjection 
\begin{align*}
p:(t^d)^\ast&\longrightarrow (\R^n)^\ast \\
r_1u^1+\cdots +r_du^d&\longmapsto r_1v_1+\cdots+r_dv_d
\end{align*}
and its dual $p^\ast:\R^n\rightarrow t^d$ defines an embedding. Therefore one can consider that our equation (\ref{generalized KW equation}) is defined for a real vector space $(\R^n)^\ast$ and its $d$-generators $v_1,\dots, v_d$.
\end{rem}
We note that statement (\ref{GKW equation has a solution}) of Theorem \ref{main theorem} immediately implies statement (\ref{wbar is in the interior of the cone}): If (\ref{GKW equation has a solution}) holds, by integrating both sides of equation (\ref{KWequation}), we have 
\begin{align*}
\sum_{j=1}^d \left(\int_M a_je^{(\iota^\ast u^j, \xi)}\ \vol\right)\iota^\ast u^j=\int_M w \ \vol.
\end{align*}
Hence it suffices to solve equation (\ref{KWequation}) under the assumption of (\ref{wbar is in the interior of the cone}) and to prove the uniqueness of the solution up to a constant which is in the orthogonal complement of a vector subspace $\sum_{j\in J_a}\R\iota^\ast u^j$. We give a proof of Theorem \ref{main theorem} in Section \ref{proof section}.

We mention the relationship between our generalized Kazdan-Warner equation and the abelian GLSM. It is well known that the Kazdan-Warner equation is related to the vortex equation which is associated with the standard action of $\U(1)$ on $\C$ (see \cite{Br1}, \cite{JT1}, and \cite{Tau1}). The present paper generalizes this relationship. For the diagonal action of a torus $K$ on $\C^d$ we have a generalization of the vortex equation which is investigated in \cite{Bap1}. In \cite{Bap1}, such a generalization of the vortex equation is called the abelian GLSM. In \cite[Theorem 3.2]{Bap1}, the Hitchin-Kobayashi correspondence for the abelian GLSM is obtained by using the general theory of the Hitchin-Kobayashi correspondence which is given in \cite{Ban1} and \cite{Mun1}. Our Theorem \ref{main theorem} gives a direct proof of \cite[Theorem 3.2]{Bap1}.

We also note that our Theorem \ref{main theorem} gives another proof of a theorem of Baraglia \cite{Bar1} which asserts that a cyclic Higgs bundle gives a solution of the periodic Toda equation. This will be explained in Section 2.

At the end of the introduction, we remark that a special case of our generalized Kazdan-Warner equations appears in \cite{BW1} and \cite{Doa1} in relation to a generalization of the Seiberg-Witten equation. In \cite{BW1}, for the case that the torus $K$ is given as 
\begin{align*}
K=\{(g, \dots, g, g^{-1},\dots,  g^{-1})\in T^{2d}\mid g\in \U(1)\},
\end{align*} 
our generalized Kazdan-Warner equation is solved on any compact Riemannian manifold  under a different assumption for given functions.

\medskip
\noindent
{\bf Acknowledgement.} The author is grateful to his supervisor Professor Ryushi Goto for fruitful discussions and encouragements. He also wishes to express his gratitude to anonymous referee for careful reading and helpful suggestions. 
\section{Cyclic Higgs bundles}
\label{sec:1}
In this  section, we see that our Theorem \ref{main theorem} gives another proof of a theorem of Baraglia \cite{Bar1} which asserts that a cyclic Higgs bundle gives a solution of the periodic Toda equation. 

Let $X$ be a compact connected Riemann surface and $K_X\rightarrow X$ the canonical line bundle. Assume that $X$ has genus $g(X)\geq 2$. Let $\g$ be a complex simple Lie algebra of rank $l$ and $G=\Aut(\g)_0$ the identity component of the automorphism group of $\g$. We define a $G$-Higgs bundle over X as a pair of a holomorphic principal $G$-bundle $P_G\rightarrow X$ and a holomorphic section $\Phi$ of $\ad(P_G)\otimes K_X\rightarrow X$. Here we denote by $\ad(P_G)\rightarrow X$ the vector bundle associated with the adjoint representation of $G$ on $\g$. The holomorphic section $\Phi$ is called a Higgs field. Let $B(\cdot, \cdot)$ be the Killing form on $\g$ and $\rho:\g\rightarrow \g$ an antilinear involution such that $-B(\cdot, \rho(\cdot))$ defines a positive-definite hermitian metric on $\g$. We define a maximal compact subgroup $G_\rho$ as $G_\rho\coloneqq \{g\in G\mid gg^\ast=1\}$, where we denote by $g^\ast$ the adjoint of $g$ with respect to the hermitian metric $-B(\cdot, \rho(\cdot))$. The following is known as the Hitchin-Kobayashi correspondence for Higgs bundles:
\begin{theo}{\rm (\label{Kobayashi-Hitchin}\cite{Hit1}, \cite{Sim1})}
{\it For a $G$-Higgs bundle $(P_G, \Phi)$, the following (i) and (ii) are equivalent.
\begin{enumerate}[(i)]
\item The $G$-Higgs bundle $(P_G, \Phi)$ is polystable.
\item There exists a $G_\rho$-subbundle $P_{G_\rho}\subseteq P_G$ such that a connection $\nabla$ which is defined as
\begin{align*}
\nabla\coloneqq A+\Phi-\rho(\Phi)
\end{align*}
is a flat connection, where we denote by $A$ the canonical connection of $P_{G_\rho}$.
\end{enumerate}
}
\end{theo}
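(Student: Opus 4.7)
The plan is to treat the two implications separately, with (i) $\Rightarrow$ (ii) being much harder than (ii) $\Rightarrow$ (i). For (ii) $\Rightarrow$ (i), the plan is a Chern-Weil argument. Decomposing the flatness condition $F_\nabla = 0$ for $\nabla = A + \Phi - \rho(\Phi)$ by bidegree in the space of $\ad(P_G)$-valued forms on $X$ yields simultaneously $\bar\partial_A \Phi = 0$ (automatic from the holomorphic Higgs bundle structure) together with the Hitchin equation $F_A - [\Phi,\rho(\Phi)] = 0$ for the canonical connection $A$ of $P_{G_\rho}$. Given any $\Phi$-invariant reduction of $P_G$ to a parabolic subgroup $P \subseteq G$, pairing $F_A$ with a dominant character of $P$ gives, via Chern-Weil, the degree of the induced line bundle; the extra term coming from $[\Phi,\rho(\Phi)]$ has a definite pointwise sign when paired with a dominant character. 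This yields the slope inequality defining semistability, and the equality case forces a compatible splitting of the Higgs bundle, upgrading semistability to polystability.

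For (i) $\Rightarrow$ (ii), the plan is the variational approach of Donaldson-Hitchin-Simpson. Fix a smooth reference reduction $h_0$ of $P_G$ to $G_\rho$ and parametrize nearby reductions by self-adjoint sections $s$ of $\ad(P_G)$ through $h = h_0 e^s$. One then considers the Donaldson functional $\mathcal{D}(h_0,h)$, augmented by a Higgs term morally of the shape $\|\Phi - \rho_h(\Phi)\|_{L^2}^2$, whose Euler-Lagrange equation is exactly the Hitchin equation $F_A - [\Phi,\rho_h(\Phi)] = 0$; any smooth minimizer produces the desired $G_\rho$-subbundle. Convexity of $\mathcal{D}$ along the geodesic paths $h_t = h_0 e^{ts}$ in the space of metrics gives uniqueness up to automorphisms of $(P_G, \Phi)$, and a minimizer is produced either by the continuity method applied to a perturbed equation $F_A - [\Phi,\rho_h(\Phi)] + \epsilon s = 0$ as $\epsilon \to 0$, or equivalently by running the associated parabolic heat flow and taking the long-time limit.

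The main obstacle is the a priori $C^0$ bound on $s$ along the approximation, which is where polystability genuinely enters. Following the Uhlenbeck-Yau method adapted by Simpson to the Higgs setting, the plan is to argue by contradiction: if $\sup_M |s|$ blows up, one renormalizes and extracts, in a weak limit, a nontrivial endomorphism whose spectral projections define a weakly holomorphic, $\Phi$-invariant subsheaf of $\ad(P_G)$ whose slope violates polystability. The technical heart of the argument is to verify coherence and $\Phi$-invariance of this limit subsheaf and to derive the precise slope inequality that contradicts polystability of $(P_G, \Phi)$. Once the $C^0$ bound is established, standard elliptic bootstrapping for the Hitchin equation upgrades the minimizer to a smooth solution, completing the proof. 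Because this is a deep classical theorem due to Hitchin \cite{Hit1} and Simpson \cite{Sim1}, our actual use in Section~\ref{Cyclic Higgs bundle section} will be to invoke it as a black box rather than to reprove it.
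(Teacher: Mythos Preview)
The paper does not prove this theorem at all: it is stated as Theorem~\ref{Kobayashi-Hitchin} with a citation to \cite{Hit1, Sim1} and then used as a black box in the discussion of cyclic Higgs bundles. Your final sentence already anticipates exactly this, so there is nothing to compare --- the paper's ``proof'' is simply the citation. Your sketch of the two implications is a reasonable outline of the classical Donaldson--Hitchin--Simpson argument, but for the purposes of this paper none of it is needed; you should just cite \cite{Hit1, Sim1} and move on, as the paper does.
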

We refer the reader to \cite{GO1}, \cite{Hit1} and \cite{Sim1}  for the definition of stability of Higgs bundles. We note that the connection $\nabla$ defined in Theorem \ref{Kobayashi-Hitchin} is flat if and only if the connection $A$ satisfies the following Hitchin's self-duality equation.
\begin{align*}
F_A+[\Phi\wedge(-\rho)(\Phi)]=0,
\end{align*} 
where we denote by $F_A$ the curvature of $A$. 

We give an example of a $G$-Higgs bundle introduced by Hitchin \cite{Hit2}. Let $\h\subseteq\g$ be a Cartan subalgebra of $\g$ and $\Delta\subseteq \h^\ast$ the root system. We denote by $\g=\h\oplus\bigoplus_{\alpha\in\Delta}\g_\alpha$ the root space decomposition. We fix a base of $\Delta$ which is denoted as $\Pi=\{\alpha_1,\dots, \alpha_l\}$. Let $\epsilon_1, \dots, \epsilon_l$ be the dual basis of $\alpha_1,\dots, \alpha_l$. We define a semisimple element $x$ of $\g$ as follows:
\begin{align*}
x\coloneqq\sum_{i=1}^l\epsilon_i.
\end{align*}
For each $\alpha\in\Delta$, let $h_\alpha\in\h$ be the coroot which is defined as $\beta(h_\alpha)=2B(\beta, \alpha)/B(\alpha, \alpha)$ for $\beta\in\Delta$. We take a basis $(e_\alpha)_{\alpha\in\Delta}$ of $\bigoplus_{\alpha\in\Delta}\g_\alpha$ which satisfies the following:
\begin{align*}
[e_\alpha, e_{-\alpha}]=h_\alpha \ \text{for each $\alpha\in\Delta$}.
\end{align*}
We define an antilinear involution $\rho: \g\rightarrow \g$ as follows:
\begin{align*}
\rho(h_\alpha)=-h_\alpha, \ \rho(e_\alpha)=-e_{-\alpha} \ \text{for each $\alpha\in\Delta$}.
\end{align*}
We can check that $-B(\cdot, \rho(\cdot))$ is a positive-definite hermitian metric on $\g$. The semisimple element $x$ is denoted as 
\begin{align*}
x=\sum_{i=1}^l r_i h_{\alpha_i}
\end{align*}
for some positive $r_1, \dots, r_l$. We define nilpotent elements $e$ and $\tilde{e}$ of $\g$ as follows:
\begin{align*}
&e\coloneqq\sum_{i=1}^l\sqrt{r_i}e_{\alpha_i}, \\
&\tilde{e}\coloneqq\sum_{i=1}^l\sqrt{r_i}e_{-\alpha_i}.
\end{align*}
Then we can check that we have the following commutation relations for $x, e, \tilde{e}$:
\begin{align*}
[x, e]=e, \ [x, \tilde{e}]=-\tilde{e}, \ [e,\tilde{e}]=x.
\end{align*}
Let $\s$ be the subalgebra which is spanned by $x, e, \tilde{e}$. Then $\s$ is one of the principal three dimensional subalgebras introduced by Kostant \cite{Kos1}. We can check that the adjoint representation of $\s$ on $\g$ has $l$-irreducible subspaces denoted as
\begin{align*}
\g=\bigoplus_{i=1}^l V_i.
\end{align*}
Moreover, we can also check that the dimension of $V_i$ is odd for each $i\in\{1,\dots, l\}$. We denote by $m_i$ the integer satisfying $\dim V_i=2m_i+1$ and we assume that we have $m_1\leq\cdots\leq m_l$. We note that $m_l$ is nothing but the height of the highest root $\delta$. We define an embedding of $\C^\ast$ into $G$ as follows:
\begin{align*}
\tau: \C^\ast&\longrightarrow G \\
e^z&\longmapsto \Exp(zx),
\end{align*}
where we denote by $\Exp:\g\rightarrow G$ the exponential map defined as $\Exp(v)\coloneqq \exp({[v, \cdot]})$ for $v\in\g$. Let $P_{K_X}$ be the holomorphic frame bundle of the canonical line bundle $K_X$. We define a holomorphic principal $G$-bundle $P_{G, K_X}$ as 
\begin{align*}
P_{G, K_X}\coloneqq P_{K_X}\times_\tau G.
\end{align*}
Then the adjoint bundle $\ad(P_{G, K_X})$ decomposes as follows:
\begin{align*}
\ad(P_{G, K_X})=\bigoplus_{m=-m_l}^{m_l}\g_m\otimes K_X^m,
\end{align*}
where we denote by $\g=\bigoplus_{m=-m_l}^{m_l}\g_m$ the eigenspace decomposition of the adjoint action of $x$. Let $e_1, \dots, e_l$ be the highest weight vectors of $V_1, \dots, V_l$. We note that for each $i$, the vector $e_i$ lies in $\g_{m_i}$. We let $e_l=e_\delta$. For a given $q=(q_1\dots, q_l)\in \bigoplus_{i=1}^lH^0(K_X^{m_i+1})$, we define a Higgs field $\Phi(q)\in H^0(\ad(P_{G, K_X})\otimes K_X)$ as follows:
\begin{align*}
\Phi(q)\coloneqq \tilde{e}+q_1e_1+\cdots +q_l e_l. 
\end{align*}
Then we have a $G$-Higgs bundle $(P_{G, K_X}, \Phi(q))$ which is parametrized by $q=(q_1,\dots, q_l)\in \bigoplus_{i=1}^lH^0(K_X^{m_i+1})$. In \cite{Hit2}, Hitchin proved that the Higgs bundle $(P_{G, K_X}, \Phi(q))$ is stable for any $q\in \bigoplus_{i=1}^lH^0(K_X^{m_i+1})$. 

It is well known that there exists a set of homogeneous polynomials $p_1, \dots, p_l\in\Sym(\g^\ast)$ of $\deg(p_i)=m_i+1$ for $i=1, \dots, l$ which satisfies
\begin{align*}
\C[p_1, \dots, p_l]=A(\g)^G,
\end{align*}
where we denote by $A(\g)^G$ the invariant polynomial ring. Moreover the generators $p_1,\dots, p_l$ can be chosen so that 
\begin{align}
p_i(\tilde{e}+z_1e_1+\cdots +z_l e_l)=z_j \ \text{for any $z_1,\dots, z_l\in\C$.}\label{polynomials}
\end{align}
Then we have the following map which is called the {\it Hitchin fibration}:
\begin{align*}
\M_G&\longrightarrow \bigoplus_{i=1}^lH^0(K_X^{m_i+1}) \\
(P_G, \Phi)&\longmapsto (p_1(\Phi), \dots, p_l(\Phi)),
\end{align*}
where we denote by $\M_G$ the moduli space of $G$-Higgs bundles. Since we have (\ref{polynomials}), the following map defines a section of the Hitchin fibration, which is called the {\it Hitchin section}:
\begin{align*}
\bigoplus_{i=1}^lH^0(K_X^{m_i+1})&\longrightarrow \M_G\\
q=(q_1,\dots, q_l)&\longmapsto (P_{G, K_X}, \Phi(q)).
\end{align*}
We refer the reader to \cite{Hit2} for the details of the Hitchin section.

For each $q=(q_1, \dots, q_l)\in\bigoplus_{i=1}^lH^0(K_X^{m_i+1})$, let $(P_{G, K_X}, \Phi(q))$ be the Higgs bundle constructed as above. In \cite{Bar1}, the Higgs bundle $(P_{G, K_X}, \Phi(q))$ is called a {\it cyclic Higgs bundle} if the parameter $q=(q_1,\dots, q_l)$ satisfies the following:
\begin{align*}
q_1=\cdots=q_{l-1}=0.
\end{align*}
We take a $\U(1)$-subbundle $P_{\U(1)}\subseteq P_{K_X}$. Let $P^0_{G_\rho}$ be the $G_\rho$-subbundle $P_{\U(1)}\times_\tau G_\rho$ of $P_{G, K_X}$. In \cite{Bar1}, Baraglia proved the following by using properties of the Kostant's principal element and the uniqueness of the solution of the Hitchin's self-duality equation:

\begin{theo}\label{Baraglia's Theorem}{\rm (\cite{Bar1})} {\it Let $\sigma\in \Aut(P_{G, K_X})$ be a gauge transformation such that the $G_\rho$-subbundle $\sigma^{-1}(P^0_{G_\rho})$ of $P_{G, K_X}$ satisfies condition (ii) of Theorem \ref{Kobayashi-Hitchin}. Then we have 
\begin{align*}
\sigma^\ast\sigma\in C^\infty(X, H),
\end{align*}
where we denote by $H$ the maximal torus $\Exp(\h)$.
}
\end{theo}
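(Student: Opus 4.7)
The plan is to exploit the cyclic $\Z_{m_l+1}$-symmetry of the Higgs field of a cyclic Higgs bundle together with the uniqueness clause of the Hitchin--Kobayashi correspondence (Theorem \ref{Kobayashi-Hitchin}). Set
\begin{align*}
g_0\coloneqq \Exp\!\Bigl(\tfrac{2\pi i}{m_l+1}\,x\Bigr)\in H,\qquad \zeta\coloneqq e^{-2\pi i/(m_l+1)}.
\end{align*}
Since $\rho(x)=-x$ and $\rho$ is antilinear, $\rho\bigl(\tfrac{2\pi i}{m_l+1}x\bigr)=\tfrac{2\pi i}{m_l+1}x$, so $g_0\in G_\rho$ as well. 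The normalization $[x,e]=e$ forces $\alpha_i(x)=1$ for every simple root, hence $\alpha(x)=\mathrm{ht}(\alpha)$ for every root $\alpha$, so $\tilde e\in \g_{-1}$ and $e_l=e_\delta\in \g_{m_l}$. A direct computation then gives $\Ad(g_0)\tilde e=\zeta\tilde e$ and $\Ad(g_0)e_l=e^{2\pi i m_l/(m_l+1)}e_l=\zeta e_l$, so the cyclic condition $q_1=\cdots=q_{l-1}=0$ yields
\begin{align*}
\Ad(g_0)\Phi(q)=\zeta\,\Phi(q).
\end{align*}
Viewed as a constant holomorphic gauge transformation of $P_{G,K_X}$, $g_0$ is therefore an isomorphism of Higgs bundles $(P_{G,K_X},\Phi(q))\to(P_{G,K_X},\zeta^{-1}\Phi(q))$.

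The core step is to show that the harmonic reduction $Q\coloneqq \sigma^{-1}(P^0_{G_\rho})$ is preserved by $g_0$. Because $\rho$ is antilinear and $|\zeta|=1$, the quadratic term $[\Phi\wedge(-\rho)(\Phi)]$ is invariant under $\Phi\mapsto \zeta^{-1}\Phi$, so the self-duality equation behind Theorem \ref{Kobayashi-Hitchin} is unchanged; hence $Q$ is also a harmonic reduction for $(P_{G,K_X},\zeta^{-1}\Phi(q))$. On the other hand, transporting $Q$ along the isomorphism $g_0$ yields another harmonic reduction $g_0(Q)$ of $(P_{G,K_X},\zeta^{-1}\Phi(q))$. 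Since cyclic Higgs bundles lie in Hitchin's section and are therefore stable, and $G=\Aut(\g)_0$ has trivial center, the uniqueness clause of Theorem \ref{Kobayashi-Hitchin} forces $g_0(Q)=Q$.

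To finish, I would translate the equality $g_0(Q)=Q$ into a statement about $\sigma^*\sigma$: it means $u\coloneqq \sigma g_0\sigma^{-1}$ is a $G_\rho$-valued smooth gauge transformation. Imposing $u^*u=1$ and using $g_0^*=g_0^{-1}$ gives
\begin{align*}
g_0^{-1}(\sigma^*\sigma)g_0=\sigma^*\sigma,
\end{align*}
so $\sigma^*\sigma$ is pointwise valued in the centralizer $Z_G(g_0)$. It remains to identify this centralizer: $\Ad(g_0)$ acts on the root space $\g_\alpha$ by $e^{2\pi i\,\mathrm{ht}(\alpha)/(m_l+1)}$, which equals $1$ only if $m_l+1$ divides $\mathrm{ht}(\alpha)$; the bound $0<|\mathrm{ht}(\alpha)|\leq m_l$ rules this out for every root. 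Thus $g_0$ is a regular element of $H$, and the classical fact that the centralizer of a regular element of a maximal torus in a connected complex reductive group coincides with that torus gives $Z_G(g_0)=H$. Hence $\sigma^*\sigma\in C^\infty(X,H)$.

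The most delicate step I anticipate is invoking uniqueness sharply enough to conclude $g_0(Q)=Q$ on the nose rather than only up to a nontrivial automorphism of the Higgs bundle; this rests on stability (not merely polystability) of Hitchin-section Higgs bundles together with the triviality of the center of $G=\Aut(\g)_0$.
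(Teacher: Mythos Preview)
Your argument is correct in outline and is essentially Baraglia's original proof, which the paper itself summarizes as proceeding ``by using properties of the Kostant's principal element and the uniqueness of the solution of the Hitchin's self-duality equation.'' The paper's own route to Theorem~\ref{Baraglia's Theorem} is genuinely different: rather than exploiting the $\Z_{m_l+1}$-symmetry of a \emph{given} harmonic reduction, it restricts from the outset to gauge transformations of the special form $\sigma_\Omega=\Exp(\Omega)$ with $\Omega\in C^\infty(X,\h_\R)$, computes that the self-duality equation for $\sigma_\Omega^{-1}(P^0_{G_\rho})$ is precisely a generalized Kazdan--Warner equation (equation~\eqref{Hitchin-Toda equation 2}) for the torus $\Exp(\inum\h_\R)$ acting on $\bigoplus_{\alpha\in\Pi\cup\{-\delta\}}\g_\alpha$, verifies condition~\eqref{an important condition}, and invokes Theorem~\ref{main theorem} to produce a diagonal harmonic reduction directly. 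Both proofs then finish with the same uniqueness appeal (stable Higgs bundle, adjoint group with trivial center), but the paper's point is to exhibit Baraglia's theorem as a corollary of its main PDE result; your argument, by contrast, is entirely independent of Theorem~\ref{main theorem}.

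One small inaccuracy worth flagging: the claim $Z_G(g_0)=H$ is not true for adjoint groups in general. Already for $G=\Aut(\mathfrak{sl}_2)_0$ one has $m_l+1=2$, the element $g_0$ is the class of $\mathrm{diag}(i,-i)$, and the Weyl element $\bigl(\begin{smallmatrix}0&1\\1&0\end{smallmatrix}\bigr)$ centralizes $g_0$ without lying in $H$. The repair is immediate: $\sigma^\ast\sigma$ is positive self-adjoint, so $\sigma^\ast\sigma=\Exp(s)$ for a unique $s$ with $\rho(s)=-s$; since $g_0\in G_\rho$, the relation $g_0^{-1}(\sigma^\ast\sigma)g_0=\sigma^\ast\sigma$ reads $\Exp(\Ad(g_0^{-1})s)=\Exp(s)$ with both exponents in $\{v:\rho(v)=-v\}$, whence $\Ad(g_0)s=s$. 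Regularity of $g_0$ at the Lie-algebra level then gives $s\in\h$, hence $\sigma^\ast\sigma\in H$.
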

We show that Theorem \ref{Baraglia's Theorem} follows from our Theorem \ref{main theorem}. Let $\h_\R$ be the real subspace of the Cartan subalgebra $\h$ generated by $(h_\alpha)_{\alpha\in\Delta}$. For a smooth map $\Omega:X\rightarrow \h_\R$, we denote by $\sigma_\Omega$ the gauge transformation $\Exp(\Omega)$. Theorem \ref{Baraglia's Theorem} says that there exists an $\Omega\in C^\infty (X, \h_\R)$ such that a $G_\rho$-subbundle $\sigma_\Omega^{-1}(P^0_{G_\rho})$ of $P_{G, K_X}$ satisfies condition (ii) of Theorem \ref{Kobayashi-Hitchin}. We denote by $A$ the canonical connection of $P^0_{G_\rho}$. The Hitchin's self-duality equation for the subbundle $\sigma_\Omega^{-1}(P^0_{G_\rho})$ is the following:
\begin{align}
F_A+2\partial\bar{\partial}\Omega-\sum_{\alpha\in\Pi\cup\{-\delta\}}(\Phi_\alpha\wedge\bar{\Phi}_\alpha) e^{2\alpha(\Omega)}h_\alpha=0, \label{Hitchin-Toda equation}
\end{align}
where the Higgs field $\Phi(q)=\tilde{e}+q_le_l$ is denoted as $\Phi(q)=\sum_{\alpha\in\Pi\cup\{-\delta\}}\Phi_\alpha e_\alpha$. We refer the reader to \cite{AF1}, \cite{Bar1} and \cite{GH1} for the relation between equation (\ref{Hitchin-Toda equation}) and the periodic Toda lattice. Let $\omega_X$ be a K\"ahler form and $\Lambda_{\omega_X}$ the adjoint $(\omega_X\wedge)^\ast$. Then (\ref{Hitchin-Toda equation}) is equivalent to the following:
\begin{align}
\Delta_{\omega_X} \Omega+\sum_{i=1}^lr_ie^{2\alpha_i(\Omega)}h_{\alpha_i}+|q_l|^2_{\omega_X}e^{-2\delta(\Omega)}h_{-\delta}=\inum \Lambda_{\omega_X} F_A .\label{Hitchin-Toda equation 2}
\end{align}
We note that equation (\ref{Hitchin-Toda equation 2}) is a special case of our generalized Kazdan-Warner equations which is associated with the action of a real torus $\Exp(\inum \h_\R)$ on $\bigoplus_{\alpha\in \Pi\cup\{-\delta\}}\g_\alpha$. Then applying Theorem \ref{main theorem}, we see that equation (\ref{Hitchin-Toda equation 2}) has a solution if $q_l\neq 0$ since we have 
\begin{align*}
\sum_{\alpha\in\Pi\cup\{-\delta\}}\R_{>0}h_\alpha=\h_\R.
\end{align*}
We see that equation (\ref{Hitchin-Toda equation 2}) has a solution also in the case that $q_l=0$ since we have
\begin{align*}
\frac{\inum}{2\pi}\int_X F_A=(2g(X)-2) x
\end{align*}
and the semisimlple element $x$ lies in $\sum_{i=1}^l\R_{>0}h_{\alpha_i}$. We note that if $q_l=0$ from \cite{KW1} we see that equation (\ref{Hitchin-Toda equation}) has a solution since at this time equation (\ref{Hitchin-Toda equation 2}) is equivalent to $l$-copies of the Kazdan-Warner equation. 

At the end of this section, we refer the reader to \cite{DL1} for a generalization of the Baraglia's cyclic Higgs bundles,  and we note that the cyclic Higgs bundle which is constructed as above appears in \cite{Sim2} as a special case of the cyclotomic Higgs bundles introduced in \cite{Sim2}. 

\section{Proof of Theorem \ref{main theorem}}\label{proof section}
\subsection{Outline of the proof}
We outline the proof of Theorem \ref{main theorem}. As we have already mentioned, it suffices to solve equation (\ref{KWequation})  under the assumption of (\ref{an important condition})
and to prove the uniqueness of the solution up to a constant which is in the orthogonal complement of $\sum_{j\in J_a}\R\iota^\ast u^j$. We prove this using the variational method. We first define a functional $E$ whose critical point is a solution of equation (\ref{KWequation}). Then we show that the functional $E$ is convex. The uniqueness of the solution of the equation follows from the convexity of $E$. Secondly, in Lemma \ref{Key Lemma} we show that the functional $E$ is bounded below and moreover the following estimate holds:
\begin{align*}
|\xi|_{L^2}\leq(E(\xi)+C)^2+C^\prime E(\xi)+C^{\prime\prime}
\end{align*}
with some constants $C$, $C^\prime$ and $C^{\prime\prime}$. We use (\ref{an important condition}) in the proof of Lemma \ref{Key Lemma}. Finally by using a method developed in \cite{Br2} we see that the functional $E$ has a critical point. 
\subsection{Proof of Theorem \ref{main theorem}}
\noindent
Hereafter we normalize the measure $\mu_{g_M}$ so that the total volume is 1. 
\begin{align*}
\Vol(M, g_M)\coloneqq\int_M1 \ d\mu_{g_M}=1.
\end{align*} 
\begin{defi}
We define a functional $E: L^{2m}_3(M, k^\ast)\rightarrow \R$ by
\begin{align*}
E(\xi)\coloneqq\int_M \Bigl\{\frac{1}{2}(d\xi, d\xi)+\sum_{j=1}^da_je^{(\iota^\ast u^j, \xi)}-(w, \xi)\Bigr\}\ \vol \ \text{for $\xi\in L^{2m}_3(M, k^\ast)$.}
\end{align*}

\end{defi}

\begin{lemm}\label{critical point}
{\it
For each $\xi\in L^{2m}_3(M, k^\ast)$, the following are equivalent:
\begin{enumerate}[(1)]
\item \label{critical point 1}$\xi$ is a critical point of $E$;
\item \label{critical point 2}$\xi$ solves equation (\ref{KWequation}).
\end{enumerate}
Moreover if $\xi$ solves equation (\ref{KWequation}), then $\xi$ is a $C^\infty$ function.
}
\end{lemm}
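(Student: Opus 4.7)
The plan is to verify the Euler--Lagrange equation of $E$ by a direct first-variation computation, and then to upgrade any $L^{2m}_3$ weak solution to a smooth solution by elliptic bootstrapping.

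First I would compute $\frac{d}{dt}\big|_{t=0} E(\xi + t\eta)$ for an arbitrary test field $\eta \in L^{2m}_3(M, k^\ast)$. Differentiating the three terms of $E$ separately, using that $\frac{d}{dt}\big|_0 e^{(\iota^\ast u^j, \xi + t\eta)} = e^{(\iota^\ast u^j, \xi)}(\iota^\ast u^j, \eta)$, gives
\[
dE_\xi(\eta) = \int_M \Bigl\{(d\xi, d\eta) + \sum_{j=1}^d a_j e^{(\iota^\ast u^j, \xi)}(\iota^\ast u^j, \eta) - (w, \eta)\Bigr\}\, \vol.
\]
The Sobolev embedding in dimension $m$ gives $L^{2m}_3(M) \hookrightarrow C^{2,\alpha}(M)$ for any $\alpha \in (0,1/2)$, so $\xi$ is at least $C^2$ and the pointwise Laplacian makes sense; Green's identity on the closed manifold $M$ then turns $\int_M (d\xi, d\eta)\, \vol$ into $\int_M (\Delta_{g_M}\xi, \eta)\, \vol$. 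Collecting terms,
\[
dE_\xi(\eta) = \int_M \Bigl(\Delta_{g_M}\xi + \sum_{j=1}^d a_j e^{(\iota^\ast u^j, \xi)}\iota^\ast u^j - w,\, \eta\Bigr)\, \vol.
\]
Since $L^{2m}_3(M,k^\ast)$ contains the smooth functions and is therefore dense in $L^2(M,k^\ast)$, and since the left factor of the inner product is continuous on $M$, vanishing of $dE_\xi$ against all such $\eta$ is equivalent to the pointwise equation (\ref{KWequation}). The converse direction is immediate from the same identity.

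For the smoothness assertion, I would run the standard elliptic bootstrap. Writing the equation as
\[
\Delta_{g_M}\xi = w - \sum_{j=1}^d a_j e^{(\iota^\ast u^j, \xi)}\iota^\ast u^j,
\]
and using that $\xi \in C^{2,\alpha}$ from the Sobolev embedding, the right-hand side lies in $C^{2,\alpha}$ because $a_j$ and $w$ are $C^\infty$ and composition with the entire function $t\mapsto e^t$ preserves H\"older regularity. Schauder estimates on the closed manifold $M$ then give $\xi \in C^{4,\alpha}$; substituting back improves the right-hand side to $C^{4,\alpha}$, and iterating yields $\xi \in C^{k,\alpha}$ for every $k$, hence $\xi \in C^\infty$. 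The main obstacle is essentially bookkeeping: checking that the initial regularity supplied by the embedding $L^{2m}_3 \hookrightarrow C^{2,\alpha}$ is strong enough both to justify the integration by parts and to seed the Schauder iteration. After that the argument is routine.
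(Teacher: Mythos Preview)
Your argument is correct and follows essentially the same route as the paper: compute the first variation, integrate by parts to obtain the Euler--Lagrange identity, and then invoke elliptic regularity. The paper is terser (it states the variational identity in one line and appeals to ``the elliptic regularity theorem'' without spelling out the Sobolev embedding or the Schauder iteration), but the content is the same.
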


\begin{proof}
We have the following for each $\eta\in L^{2m}_3(M, k^\ast)$:
\begin{align*}
\left.\frac{d}{dt}\right|_{t=0}E(\xi+t\eta)=\int_M (\Delta_{g_M}\xi+\sum_{j=1}^d a_j e^{(\iota^\ast u^j, \xi)}\iota^\ast u^j-w, \eta) \ \vol.
\end{align*}
Therefore (\ref{critical point 1}) and (\ref{critical point 2}) are equivalent. The rest of the proof follows from the elliptic regularity theorem.
\end{proof}

\begin{lemm}\label{convexity}
{\it
For each $\xi, \eta\in L^{2m}_3(M, k^\ast)$ and $t\in\R$, the following holds:
\begin{align*}
\frac{d^2}{dt^2}E(\xi+t\eta)\geq0.
\end{align*}
Moreover the following are equivalent:
\begin{enumerate}[(1)]
\item There exists a $t_0\in\R$ such that $\left. \frac{d^2}{dt^2}\right|_{t=t_0}E(\xi+t\eta)= 0$;
\item $\frac{d^2}{dt^2}E(\xi+t\eta)= 0 $ for all $t\in \R$;
\item $\eta$ is a constant which is in the orthogonal complement of $\sum_{j\in J_a}\R\iota^\ast u^j$.
\end{enumerate}
}
\end{lemm}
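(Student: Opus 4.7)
The plan is to compute the second derivative $\tfrac{d^2}{dt^2}E(\xi+t\eta)$ explicitly and then read off both the non-negativity and the equality cases directly from the integrand. Differentiating once under the integral sign gives
\begin{align*}
\tfrac{d}{dt}E(\xi+t\eta)=\int_M\Bigl\{(d(\xi+t\eta),d\eta)+\sum_{j=1}^d a_j e^{(\iota^\ast u^j,\xi+t\eta)}(\iota^\ast u^j,\eta)-(w,\eta)\Bigr\}\,\vol,
\end{align*}
and a second differentiation yields
\begin{align*}
\tfrac{d^2}{dt^2}E(\xi+t\eta)=\int_M\Bigl\{(d\eta,d\eta)+\sum_{j=1}^d a_j e^{(\iota^\ast u^j,\xi+t\eta)}(\iota^\ast u^j,\eta)^2\Bigr\}\,\vol.
\end{align*}
Since $a_j\geq 0$ and the exponential is positive, every term of the integrand is pointwise non-negative, which establishes the first assertion.

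For the equivalence, the implications (2)$\Rightarrow$(1) and (3)$\Rightarrow$(2) are immediate: if $\eta$ is constant and orthogonal to $\iota^\ast u^j$ for every $j\in J_a$, then $d\eta=0$ and for each $j$ either $a_j\equiv 0$ (when $j\notin J_a$) or $(\iota^\ast u^j,\eta)=0$, so the integrand vanishes identically in $t$. The heart of the lemma is therefore the implication (1)$\Rightarrow$(3).

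To prove (1)$\Rightarrow$(3), suppose $\tfrac{d^2}{dt^2}|_{t=t_0}E(\xi+t\eta)=0$. Since the integrand at $t=t_0$ is a sum of non-negative quantities, each one must vanish almost everywhere. From $(d\eta,d\eta)=0$ a.e.\ and the connectedness of $M$ we conclude that $\eta$ is a constant element of $k^\ast$. For each $j\in J_a$ we then have
\begin{align*}
a_j(x)\,e^{(\iota^\ast u^j,\xi(x)+t_0\eta)}(\iota^\ast u^j,\eta)^2=0\quad\text{for a.e.\ }x\in M,
\end{align*}
and because the exponential factor is strictly positive this forces $a_j(x)(\iota^\ast u^j,\eta)^2=0$ a.e. Now condition $(\ast)$ is invoked: since $a_j^{-1}(0)$ has measure zero for $j\in J_a$, we deduce $(\iota^\ast u^j,\eta)=0$ for every $j\in J_a$, i.e.\ $\eta\perp\sum_{j\in J_a}\R\,\iota^\ast u^j$. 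The main (indeed only) subtlety is this last step; once $\eta$ is known to be such a constant, the formula for the second derivative makes it obvious that it vanishes for all $t\in\R$, yielding (2) and completing the cycle of implications.
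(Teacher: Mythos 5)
Your proof is correct and follows essentially the same route as the paper, which simply records the formula
\begin{align*}
\frac{d^2}{dt^2}E(\xi+t\eta)=\int_M\Bigl\{(d\eta, d\eta)+\sum_{j=1}^da_je^{(\iota^\ast u^j, \xi+t\eta)}(\iota^\ast u^j, \eta)^2 \Bigr\}\ \vol
\end{align*}
and asserts that it implies the claim. You have merely made explicit the details the paper leaves to the reader (constancy of $\eta$ from $d\eta=0$ and connectedness, and the use of $a_j>0$ almost everywhere for $j\in J_a$ to kill $(\iota^\ast u^j,\eta)$), so there is nothing to add.
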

\begin{proof}
A direct computation shows that
\begin{align*}
\frac{d^2}{dt^2}E(\xi+t\eta)=\int_M\{(d\eta, d\eta)+\sum_{j=1}^da_je^{(\iota^\ast u^j, \xi+t\eta)}(\iota^\ast u^j, \eta)^2 \}\ \vol.
\end{align*}
This implies the claim.
\end{proof}
\begin{cor}
{\it
Let $\xi$ and $\xi^\prime$ be $C^\infty$ solutions of equation (\ref{KWequation}). Then $\xi-\xi^\prime$ is a constant which is in the orthogonal complement of $\sum_{j\in J_a}\R\iota^\ast u^j$. 
}
\end{cor}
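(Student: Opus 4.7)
The plan is to combine the fact that both $\xi$ and $\xi'$ are critical points of $E$ with the nonnegativity and rigidity of the second variation established in Lemma \ref{convexity}.

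Set $\eta := \xi' - \xi$ and consider the one-variable function
\[
f(t) := E(\xi + t\eta), \qquad t \in [0,1].
\]
First I would note that, by Lemma \ref{critical point}, the assumption that $\xi$ and $\xi'$ are $C^\infty$ solutions of (\ref{KWequation}) implies they are critical points of $E$, so $f'(0) = 0$ and $f'(1) = 0$. Next, Lemma \ref{convexity} gives $f''(t) \geq 0$ on all of $\R$, i.e.\ $f'$ is nondecreasing. Combined with $f'(0) = f'(1)$, this forces $f'$ to be constant on $[0,1]$, and therefore $f''(t) = 0$ for every $t \in [0,1]$.

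Finally, applying the equivalence in Lemma \ref{convexity} (the existence of a single $t_0$ with $f''(t_0) = 0$ already implies $\eta$ is a constant in the orthogonal complement of $\sum_{j \in J_a} \R \iota^* u^j$), I conclude that $\xi' - \xi$ has exactly this form, which is the statement of the corollary.

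There is essentially no serious obstacle here: the argument is a direct application of the already-proved convexity lemma to the line segment joining the two solutions, and the only input beyond that is the characterization of critical points via Lemma \ref{critical point}. The whole proof fits in a few lines.
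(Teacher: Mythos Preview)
Your proof is correct and takes essentially the same approach as the paper: both use Lemma \ref{critical point} to get vanishing first derivative at the endpoints of the segment joining $\xi$ and $\xi'$, then invoke the convexity and rigidity statement of Lemma \ref{convexity} to force $\eta$ into the orthogonal complement of $\sum_{j\in J_a}\R\iota^\ast u^j$. The paper's version is slightly terser, but the logic is identical.
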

\begin{proof}
From Lemma \ref{critical point}, we have the following:
\begin{align*}
\left. \frac{d}{dt}\right|_{t=0}E(t\xi+(1-t)\xi^\prime)=\left. \frac{d}{dt}\right|_{t=1}E(t\xi+(1-t)\xi^\prime)=0.
\end{align*}
Then Lemma \ref{convexity} gives the result.
\end{proof}
Let $W\subseteq k^\ast$ be the vector subspace of $k^\ast$ which is generated by $(\iota^\ast u^j)_{j\in J_a}$. Hereafter we assume that $W=k^\ast$ for simplicity. We can make the assumption since if the vector subspace $W$ is strictly smaller than the vector space $k^\ast$, then by restricting the domain of the functional $E$ to the subspace $L^{2m}_3(M,W)$ of $L^{2m}_3(M, k^\ast)$ we have the same proof as in the case that $W=k^\ast$. 
\begin{lemm}\label{Key Lemma}
{\it
The functional $E$ is bounded below. Further there exist non-negative constants $C$, $C^{\prime}$ and $C^{\prime \prime}$ such that
\begin{align*}
|\xi|_{L^2}\leq(E(\xi)+C)^2+C^\prime E(\xi)+C^{\prime\prime}
\end{align*}
for all $\xi\in L^{2m}_3(M, k^\ast)$.
}
\end{lemm}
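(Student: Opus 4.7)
The strategy is to split $\xi$ into its constant average and a mean-zero oscillation, control the average by a convex function on the finite-dimensional space $k^{\ast}$, and control the oscillation by the Poincar\'e inequality on $M$. Write $\xi = \bar\xi + \xi_0$, where $\bar\xi \coloneqq \int_M \xi\,\vol \in k^\ast$ is constant and $\int_M \xi_0\,\vol = 0$, so that $d\xi = d\xi_0$. For each $j \in J_a$, condition $(\ast)$ gives $\log a_j \in L^1(M)$, and Jensen's inequality applied to the convex function $e^t$ on the probability space $(M, \vol)$ yields
\begin{align*}
\int_M a_j e^{(\iota^\ast u^j, \xi)}\,\vol = \int_M e^{\log a_j + (\iota^\ast u^j, \xi)}\,\vol \geq \exp\bigl(c_j + (\iota^\ast u^j, \bar\xi)\bigr),
\end{align*}
where $c_j \coloneqq \int_M \log a_j\,\vol$ is finite; this isolates the dependence of the exponential sum on $\bar\xi$ into a finite-dimensional problem.

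Introduce $F : k^\ast \to \R$ by $F(y) \coloneqq \sum_{j \in J_a} e^{c_j + (\iota^\ast u^j, y)} - (\bar w, y)$, where $\bar w \coloneqq \int_M w\,\vol$. By hypothesis (\ref{an important condition}), $\bar w = \sum_{j \in J_a} \lambda_j \iota^\ast u^j$ with every $\lambda_j > 0$. Minimizing each scalar function $t \mapsto e^{c_j + t} - \lambda_j t$ on $\R$ at $t = \log \lambda_j - c_j$ shows $F$ is bounded below; combined with the standing assumption that $(\iota^\ast u^j)_{j \in J_a}$ spans $k^\ast$, the fact that each scalar summand is coercive on $\R$ (exponential growth as $t \to +\infty$, linear growth $\lambda_j|t|$ as $t \to -\infty$) transfers to $F(y) \geq c_0 |y| - C_0$ for some $c_0 > 0$.

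Now decompose $\int_M(w, \xi)\,\vol = (\bar w, \bar\xi) + \int_M (w, \xi_0)\,\vol$ and estimate the second piece by Cauchy--Schwarz and Young's inequality, $\bigl|\int_M (w, \xi_0)\,\vol\bigr| \leq \epsilon |\xi_0|_{L^2}^2 + |w|_{L^2}^2 / (4\epsilon)$. The Poincar\'e inequality on the mean-zero subspace of the compact manifold gives $|\xi_0|_{L^2}^2 \leq C_P |d\xi_0|_{L^2}^2$; choosing $\epsilon < 1/(2 C_P)$ and substituting yields
\begin{align*}
E(\xi) \geq \kappa\, |d\xi_0|_{L^2}^2 + F(\bar\xi) - C
\end{align*}
for some $\kappa, C > 0$, proving the lower bound on $E$. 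For the quantitative estimate, invert this to control $|\xi_0|_{L^2}^2 \leq \kappa^{-1} C_P(E(\xi) + C + C_0)$, and use coercivity of $F$ to bound $|\bar\xi| \leq c_0^{-1}(E(\xi) + C + C_0)$; then $|\xi|_{L^2} \leq |\bar\xi| + |\xi_0|_{L^2}$ combined with a crude polynomial domination yields the stated bound.

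The main obstacle is the coercivity of $F$: the positivity of every $\lambda_j$ in hypothesis (\ref{an important condition}) is precisely what allows each linear term $-\lambda_j(\iota^\ast u^j, \cdot)$ to be absorbed by the corresponding one-sided exponential in all directions of $k^\ast$, so that the spanning condition on the $\iota^\ast u^j$ transfers the coordinate-wise coercivity back to $y$. Without (\ref{an important condition}) the functional $F$, and hence $E$, would fail to be bounded below along some ray.
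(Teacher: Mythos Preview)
Your argument is correct and follows the same architecture as the paper's proof: split $\xi=\bar\xi+\xi_0$, apply Jensen's inequality to reduce the exponential sum to a finite-dimensional convex function of $\bar\xi$, use condition~(\ref{an important condition}) to see that this function is coercive, and control $\xi_0$ by the Poincar\'e inequality. The one substantive difference is the strength of the coercivity you extract: you observe that each scalar summand $t\mapsto e^{c_j+t}-\lambda_j t$ grows at least linearly in $|t|$, and combined with the spanning hypothesis this gives $F(y)\geq c_0|y|-C_0$, hence a \emph{linear} bound $|\bar\xi|\lesssim E(\xi)+\text{const}$. The paper instead argues by compactness over the unit sphere that $E_1(\xi)-|\bar\xi|^{1/2}$ is bounded below, obtaining only $|\bar\xi|\leq (E(\xi)+C)^2$; this weaker square-root coercivity is precisely what accounts for the quadratic shape of the inequality in the lemma statement. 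Your route is slightly sharper and more direct, and you correctly note that the resulting linear-plus-square-root bound on $|\xi|_{L^2}$ is then dominated by the stated quadratic expression.
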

\begin{proof}
We first introduce a notation. For a $k^\ast$-valued function $\xi^\prime:M\rightarrow k^\ast$, we denote by $\bar{\xi^\prime}$ the average of $\xi^\prime$:
\begin{align*}
\bar{\xi^\prime}\coloneqq \int_M\xi^\prime\ \vol.
\end{align*}
Then the  following holds for each $\xi\in L^{2m}_3(M, k^\ast)$:
\begin{align*}
&E(\xi) \\
=&\int_M \Bigl\{\frac{1}{2}(d\xi, d\xi)+\sum_{j=1}^da_je^{(\iota^\ast u^j, \xi)}-(w, \xi)\Bigr\} \ \vol \\
=&\int_M \Bigl\{\frac{1}{2}(d\xi, d\xi)-(w, \xi-\bar{\xi})\Bigr\} \ \vol +\sum_{j=1}^d\int_Ma_je^{(\iota^\ast u^j, \xi)}\ \vol-(\bar{w},\bar{\xi}) \\
=&\int_M \Bigl\{\frac{1}{2}(d\xi, d\xi)-(w, \xi-\bar{\xi}) \Bigr\}\ \vol+\sum_{j\in J_a}\int_Me^{\log a_j}e^{(\iota^\ast u^j, \xi)}\ \vol-(\bar{w},\bar{\xi})  \\
\geq& \int_M \Bigl\{\frac{1}{2}(d\xi, d\xi)-(w, \xi-\bar{\xi}) \Bigr\}\ \vol+\sum_{j\in J_a}(e^{\int_M\log a_j\ \vol})e^{(\iota^\ast u^j, \bar{\xi})}-(\bar{w},\bar{\xi}), 
\end{align*}
where the final inequality follows from the Jensen's inequality. Since we have (\ref{an important condition}), there exist positive numbers $(s_j)_{j\in J_a}$ such that $\bar{w}=\sum_{j\in J_a}s_j \iota^\ast u^j$. Then we have the following:
\begin{align*}
&E(\xi) \\
\geq& \int_M \Bigl\{\frac{1}{2}(d\xi, d\xi)-(w, \xi-\bar{\xi}) \Bigr\}\ \vol+\sum_{j\in J_a}\left \{s_j^\prime e^{(\iota^\ast u^j, \bar{\xi})}-s_j(\iota^\ast u^j,\bar{\xi})\right\},
\end{align*}
where we denote by $s_j^\prime$ the coefficient $e^{\int_M\log a_j\ \vol}$ for each $j\in J_a$. We set $E_0$ and $E_1$ as follows: 
\begin{align*}
E_0(\xi)&\coloneqq \int_M \Bigl\{\frac{1}{2}(d\xi, d\xi)-(w, \xi-\bar{\xi}) \Bigr\}\ \vol \\
E_1(\xi)&\coloneqq \sum_{j\in J_a}\left\{s_j^\prime e^{(\iota^\ast u^j, \bar{\xi})}-s_j(\iota^\ast u^j,\bar{\xi})\right \}.
\end{align*}
We note that $E_1$ depends only on the average of $\xi$. Then the Poincar\'{e} inequality implies that $E_0$ is bounded below. We see that $E_1$ is also bounded below since the following $f$ is bounded below for any $s, s^\prime\in\R_{>0}$:
\begin{align*}
f(y)\coloneqq s^\prime e^y-sy \ \ \text{for $y\in\R$}.
\end{align*}
Therefore $E$ is bounded below. Further we have the following: for each $\bar{\xi}\neq0$,  we see $\lim_{t\to\infty}E_1(t\bar{\xi})-|t\bar{\xi}|^{1/2}=\infty$. This implies that $E_1(\xi)-|\bar{\xi}|^{1/2}$ attains a minimum since we have the following:
\begin{align*}
\min_{\xi\in L^{2m}_3(M, k^\ast)}\{E_1(\xi)-|\bar{\xi}|^{1/2}\}=\min_{|\bar{\xi}|=1}\min_{t\in\R}\{E_1(t\bar{\xi})-|t\bar{\xi}|^{1/2}\}.
\end{align*}
In particular, $E_1(\xi)-|\bar{\xi}|^{1/2}$ is bounded below. This implies that there exists a constant $C$ such that 
\begin{align*}
|\bar{\xi}|\leq (E(\xi)+C)^2
\end{align*}
for all $\xi\in L^{2m}_3(M, k^\ast)$. We also obtain the following estimate for some $C^\prime$ and $C^{\prime \prime}$ from the Poincar\'{e} inequality:
\begin{align*}
|\xi-\bar{\xi}|_{L^2}\leq C^\prime E(\xi)+C^{\prime\prime}.
\end{align*}
Then we have
\begin{align*}
|\xi|_{L^2}&\leq |\bar{\xi}|+|\xi-\bar{\xi}|_{L^2}\leq (E(\xi)+C)^2+C^\prime E(\xi)+C^{\prime\prime}
\end{align*}
and this implies the claim.
\end{proof}
We note that the following method was originally developed by Bradlow \cite{Br2}:
\begin{defi}
Let $B>0$ a positive real number. We define a subset $L^{2m}_3(M, k^\ast)_B$ of $L^{2m}_3(M, k^\ast)$ by
\begin{align*}
L^{2m}_3(M, k^\ast)_B\coloneqq\{\xi\in L^{2m}_3(M, k^\ast)\mid |\Delta_{g_M}\xi+\sum_{j=1}^da_je^{(\iota^\ast u^j, \xi)}\iota^\ast u^j-w|_{L^{2m}_1}^{2m}\leq B\}.
\end{align*}
\end{defi}
Then we have the following Lemma \ref{local minimum}. For the proof of Lemma \ref{local minimum}, we refer the reader to \cite[Lemma 3.4.2]{Br2} and \cite[Proposition 3.1]{AG1}.
\begin{lemm}\label{local minimum}
{\it 
If $E|_{L^{2m}_3(M, k^\ast)_B}$ attains a minimum at $\xi_0\in L^{2m}_3(M, k^\ast)_B$, then $\xi_0$ is a critical point of $E$. 
}
\end{lemm}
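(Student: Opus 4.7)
The plan is to argue by contradiction. Suppose $\xi_0 \in L^{2m}_3(M, k^\ast)_B$ minimizes $E|_{L^{2m}_3(M,k^\ast)_B}$ but $\xi_0$ is not a critical point of $E$, so that the $L^2$-gradient
\begin{align*}
v_0 := \Delta_{g_M}\xi_0 + \sum_{j=1}^d a_j e^{(\iota^\ast u^j, \xi_0)}\iota^\ast u^j - w
\end{align*}
is nonzero. I would then produce a $\xi \in L^{2m}_3(M, k^\ast)_B$ with $E(\xi) < E(\xi_0)$, contradicting minimality. Note that the constraint defining the set is precisely $|v(\xi)|_{L^{2m}_1}^{2m} \leq B$, where $v(\xi)$ denotes the $L^2$-gradient of $E$ at $\xi$. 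If $|v_0|_{L^{2m}_1}^{2m} < B$ (interior case), then the first-variation formula from the proof of Lemma \ref{critical point} gives $E(\xi_0 - t v_0) = E(\xi_0) - t|v_0|_{L^2}^2 + O(t^2) < E(\xi_0)$ for small $t > 0$, and $\xi_0 - t v_0 \in L^{2m}_3(M,k^\ast)_B$ by continuity of $\xi \mapsto |v(\xi)|_{L^{2m}_1}^{2m}$, a contradiction. So it remains to handle the boundary case $|v_0|_{L^{2m}_1}^{2m} = B$.

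For the boundary case I would use the $L^2$-gradient flow $\partial_t \xi(t) = -v(\xi(t))$ with $\xi(0) = \xi_0$. Short-time existence in $L^{2m}_3(M, k^\ast)$ follows from standard parabolic theory, since the nonlinearity $\sum_j a_j e^{(\iota^\ast u^j, \xi)}\iota^\ast u^j$ is smooth and the $a_j$ are non-negative. Along the flow, $\frac{d}{dt}E(\xi(t)) = -|v(\xi(t))|_{L^2}^2$, which is strictly negative at $t = 0$. A direct computation shows that $v(t) := v(\xi(t))$ satisfies the linear parabolic equation
\begin{align*}
\partial_t v(t) + \Delta_{g_M} v(t) + \sum_{j=1}^d a_j e^{(\iota^\ast u^j, \xi(t))}(\iota^\ast u^j, v(t))\iota^\ast u^j = 0,
\end{align*}
whose zero-order part is pointwise positive semi-definite (the infinitesimal form of the convexity in Lemma \ref{convexity}).

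The main obstacle is to show that $|v(t)|_{L^{2m}_1}^{2m} \leq B$ for small $t \geq 0$, so that the flow does not leave $L^{2m}_3(M, k^\ast)_B$; together with the strict decrease of $E$ this contradicts minimality of $\xi_0$. The $L^2$-monotonicity
\begin{align*}
\tfrac{d}{dt}|v(t)|_{L^2}^2 = -2\int_M (v(t),\, \Delta_{g_M}v(t) + \textstyle\sum_j a_j e^{(\iota^\ast u^j, \xi(t))}(\iota^\ast u^j, v(t))\iota^\ast u^j)\, \vol \leq 0
\end{align*}
is immediate from Lemma \ref{convexity}. For $L^{2m}_1$ the argument is more delicate: one must differentiate both $|v|_{L^{2m}}^{2m}$ and $|\nabla v|_{L^{2m}}^{2m}$ in $t$, and commute $\nabla$ past the parabolic operator. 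The commutators produce curvature terms on $M$ and terms involving $d\xi(t)$, which must be absorbed using the ellipticity and positivity of the Hessian together with the smallness of $t$. This is exactly the computation carried out in \cite[Lemma 3.4.2]{Br2} and \cite[Proposition 3.1]{AG1}, and I would adapt their argument, the essential inputs being smoothness and non-negativity of the $a_j e^{(\iota^\ast u^j,\xi)}$ and the positive semi-definiteness furnished by Lemma \ref{convexity}.
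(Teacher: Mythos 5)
Your interior case is fine, but the boundary case $|v_0|_{L^{2m}_1}^{2m}=B$ --- the only nontrivial one, and the whole point of the lemma --- has a genuine gap. Along the $L^2$-gradient flow you must show $\frac{d}{dt}\big|_{t=0}|v(t)|_{L^{2m}_1}^{2m}\leq 0$, i.e.\ you must control both $\int_M|v|^{2m}\ \vol$ and $\int_M|\nabla v|^{2m}\ \vol$. The first term behaves well, but for the second the commutator of $\nabla$ with $\Delta_{g_M}$ produces Ricci terms, and differentiating the zero-order coefficient $\sum_j a_je^{(\iota^\ast u^j,\xi(t))}(\iota^\ast u^j,\cdot)\iota^\ast u^j$ in the direction $\nabla$ produces terms involving $d\xi$; none of these has a sign, and the nonpositive terms you do get are not coercive enough to absorb them. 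The best available estimate is of the form $\frac{d}{dt}|v(t)|_{L^{2m}_1}^{2m}\leq C\,|v(t)|_{L^{2m}_1}^{2m}$, which by Gronwall gives $|v(t)|_{L^{2m}_1}^{2m}\leq Be^{Ct}$ --- so the flow may leave $L^{2m}_3(M,k^\ast)_B$ instantly. Moreover the references you lean on do not carry out this flow computation: Bradlow's Lemma 3.4.2 (and this paper) use a line segment in a carefully chosen direction, not the gradient flow.

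The paper's device, which you are missing, is to replace the gradient direction by the \emph{Newton direction}. Since $(DF)_{\xi_0}$ is a positive, self-adjoint, invertible elliptic operator, one solves $(DF)_{\xi_0}(\eta)=-F(\xi_0)$ and moves along $\xi_t=\xi_0+t\eta$. Then $\left.\frac{d}{dt}\right|_{t=0}E(\xi_t)=(F(\xi_0),\eta)_{L^2}=-((DF)_{\xi_0}(\eta),\eta)_{L^2}<0$ by positivity, and crucially $\left.\frac{d}{dt}\right|_{t=0}F(\xi_t)=(DF)_{\xi_0}(\eta)=-F(\xi_0)$ \emph{exactly}, so that $\left.\frac{d}{dt}\right|_{t=0}|F(\xi_t)|_{L^{2m}_1}^{2m}=-2m\int_M\bigl(|dF(\xi_0)|^{2m}+|F(\xi_0)|^{2m}\bigr)\ \vol=-2mB<0$, with no commutator or curvature terms to estimate because the first-order variation of $F$ is literally a negative multiple of $F(\xi_0)$ itself. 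This one choice of direction handles the interior and boundary cases simultaneously. If you insist on a flow, the object that works is the Newton flow $\partial_t\xi=-(DF)_{\xi}^{-1}F(\xi)$, along which $F(\xi(t))=e^{-t}F(\xi_0)$ to leading order --- but at that point the line argument is simpler, and it is what the cited lemmas actually prove.
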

\begin{proof}
We define a map $F: L^{2m}_3(M, k^\ast)\rightarrow L^{2m}_1(M, k^\ast)$ as $F(\xi)=\Delta_\gM\xi+\sum_{j=1}^da_je^{(\iota^\ast u^j, \xi)}\iota^\ast u^j-w$ for $\xi\in L^{2m}_3(M, k^\ast)$. Then its linearization at $\xi$ is the following: 
\begin{align*}
(DF)_\xi: L^{2m}_3(M, k^\ast)&\longrightarrow L^{2m}_1(M, k^\ast) \\
\eta&\longmapsto \Delta_\gM \eta+\sum_{j=1}^d a_je^{(\iota^\ast u^j, \xi)}(\iota^\ast u^j,\eta)\iota^\ast u^j.
\end{align*}
The linearization $(DF)_\xi$ satisfies the following for each $\eta, \eta^\prime\in L^{2m}_3(M, k^\ast)$:
\begin{align}
((DF)_\xi(\eta), \eta^\prime)_{L^2}=\int_M\Bigl\{(d\eta, d\eta^\prime)+\sum_{j=1}^da_je^{(\iota^\ast u^j, \xi)}(\iota^\ast u^j, \eta)(\iota^\ast u^j, \eta^\prime) \Bigr\}\ \vol \label{DF is self-adjoint},
\end{align}
where we denote by $(\cdot, \cdot)_{L^2}$ the $L^2$-inner product. (\ref{DF is self-adjoint}) says that the linearization $(DF)_\xi$ is a self-adjoint operator with respect to the $L^2$-inner product. We set $\eta=\eta^\prime$. Then we have
\begin{align}
((DF)_\xi(\eta), \eta)_{L^2}=\int_M\Bigl\{(d\eta, d\eta)+\sum_{j=1}^da_je^{(\iota^\ast u^j, \xi)}(\iota^\ast u^j, \eta)^2 \Bigr\}\ \vol. \label{linearization satisfies}
\end{align}
From (\ref{linearization satisfies}) we see that the linearization $(DF)_\xi$ is injective since if $\eta\in L^{2m}_3(M, k^\ast)$ satisfies $(DF)_\xi(\eta)=0$, then the right hand side of (\ref{linearization satisfies}) must be  0 and thus we have $\eta=0$. It should be noted that we have used here the assumption that $(\iota^\ast u^j)_{j\in J_a}$ generates $k^\ast$. Then we see that $(DF)_\xi$ is bijective since it is a formally self-adjoint elliptic operator.  Assume that $E|_{L^{2m}_3(M, k^\ast)_B}$ attains a minimum at $\xi_0\in L^{2m}_3(M, k^\ast)_B$. Since $(DF)_{\xi_0}$ is bijective, there uniquely exists an $\eta\in L^{2m}_3(M, k^\ast)$ such that 
\begin{align*}
(DF)_{\xi_0}(\eta)=-F(\xi_0).
\end{align*}
Assume that $\xi_0$ is not a critical point of $E$. Then we have $\eta\neq 0$. Let $\xi_t$ denotes a line $\xi_0+t\eta$ parametrized by $t\in\R$. We then have the following:
\begin{align*}
\left.\frac{d}{dt}\right|_{t=0}E(\xi_t)&=(F(\xi_0), \eta)_{L^2} \\
&=-((DF)_{\xi_0}(\eta), \eta) \\
&=- \int_M\Bigl\{(d\eta, d\eta)+\sum_{j=1}^da_je^{(\iota^\ast u^j, \xi)}(\iota^\ast u^j, \eta)^2 \Bigr\}\ \vol<0.
\end{align*}
Then for a sufficiently small $\epsilon>0$, the functional $E(\xi_t)$ strictly decreases with increasing $t\in(-\epsilon, \epsilon)$. We also have the following:
\begin{align*}
\left.\frac{d}{dt}\right|_{t=0}F(\xi_t)=(DF)_{\xi_0}(\eta)=-F(\xi_0).
\end{align*}
This implies that around 0, $|F(\xi_t)|_{L^{2m}_1}^{2m}$ decreases with increasing $t$ :
\begin{align*}
&\left.\frac{d}{dt}\right|_{t=0}|F(\xi_t)|_{L^{2m}_1}^{2m} \\
=& \left.\frac{d}{dt}\right|_{t=0}\int_M |dF(\xi_t)|^{2m}\ \vol 
+\left.\frac{d}{dt}\right|_{t=0}\int_M |F(\xi_t)|^{2m}\ \vol \\
=&-2m\int_M |dF(\xi_0)|^{2m}\ \vol 
-2m\int_M |F(\xi_0)|^{2m}\ \vol <0.\\
\end{align*}
Further this implies that for a sufficiently small $t>0$, $\xi_t$ satisfies the following:
\begin{align*}
&E(\xi_t)<E(\xi_0), \\
&|F(\xi_t)|^{2m}_{L^{2m}_1}\leq B.
\end{align*}
However, this contradicts the assumption that $E|_{L^{2m}_3(M, k^\ast)_B}$ attains a minimum at $\xi_0\in L^{2m}_3(M, k^\ast)_B$. Hence $\xi_0$ is a critical point of $E$. 
\end{proof}
Therefore the problem reduces to show that $E|_{L^{2m}_3(M, k^\ast)_B}$ attains a minimum. To see this, we prove the following Lemma \ref{final lemma}:

\begin{lemm}\label{final lemma}
{\it 
Let $(\xi_i)_{i\in\N}$ be a  sequence of $L^{2m}_3(M, k^\ast)_B$ such that 
\begin{align*}
\lim_{i\to\infty}E(\xi_i)=\inf_{\eta\in L^{2m}_3(M, k^\ast)_B}E(\eta).
\end{align*}
Then we have $\sup_{i\in\N}|\xi_i|_{L^{2m}_3}<\infty$.
}
\end{lemm}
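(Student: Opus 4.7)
The plan is to extract first an $L^2_1$-bound on $\xi_i$ from the variational information, then use the defining $L^{2m}_1$-constraint on $F(\xi_i)$ to obtain a $C^0$-bound on $F(\xi_i)$, run a Moser-type iteration to deduce a uniform $C^0$-bound on $\xi_i$ itself, and finally bootstrap to $L^{2m}_3$ by standard elliptic regularity. Since $E(\xi_i)$ converges and is therefore bounded, Lemma \ref{Key Lemma} gives $|\xi_i|_{L^2}\leq C$. Combining this with $\sum_j a_je^{(\iota^\ast u^j,\xi_i)}\geq 0$ and $|\int_M(w,\xi_i)\vol|\leq |w|_{L^2}|\xi_i|_{L^2}$, the very definition of $E$ yields uniform bounds on $|d\xi_i|_{L^2}$ and on $\sum_j\int_M a_je^{(\iota^\ast u^j,\xi_i)}\vol$.

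Next, since $2m>m=\dim M$, the Sobolev embedding $L^{2m}_1\hookrightarrow C^0$ together with the defining constraint $|F(\xi_i)|_{L^{2m}_1}^{2m}\leq B$ provides a uniform $C^0$-bound on
\begin{align*}
F(\xi_i)=\Delta_{g_M}\xi_i+\sum_{j=1}^d a_je^{(\iota^\ast u^j,\xi_i)}\iota^\ast u^j-w.
\end{align*}
Setting $\phi_{i,j}:=(\iota^\ast u^j,\xi_i)$ for $j\in J_a$, the equation reads componentwise
\begin{align*}
\Delta_{g_M}\phi_{i,j}+\sum_{k\in J_a}(\iota^\ast u^j,\iota^\ast u^k)\,a_ke^{\phi_{i,k}}=(F(\xi_i)+w,\iota^\ast u^j)
\end{align*}
with uniformly $C^0$-bounded right-hand side. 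The main obstacle is to upgrade this to a uniform $C^0$-bound on the $\phi_{i,j}$, from which $|\xi_i|_{C^0}\leq C$ follows because $(\iota^\ast u^j)_{j\in J_a}$ spans $k^\ast$ under the simplifying assumption in force. I would carry this out by a Moser-type iteration: testing the scalar equation against $(\phi_{i,j}^+)^{2p-1}$ and integrating by parts produces a coercive $H^1$-term in $(\phi_{i,j}^+)^p$ together with the nonnegative diagonal term $(\iota^\ast u^j,\iota^\ast u^j)\,a_je^{\phi_{i,j}}(\phi_{i,j}^+)^{2p-1}$, which grows super-polynomially in $\phi_{i,j}^+$; the off-diagonal contributions (whose signs are dictated by the Gram matrix $((\iota^\ast u^j,\iota^\ast u^k))$) are absorbed using the already established $L^1$-bound on $a_ke^{\phi_{i,k}}$ together with Sobolev and H\"older, yielding a reverse-H\"older chain that iterates to an $L^\infty$-bound. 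A symmetric argument with $(-\phi_{i,j}^-)^{2p-1}$ controls $\phi_{i,j}$ from below.

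Once $|\xi_i|_{C^0}\leq C$, the nonlinear term $N(\xi_i):=\sum_j a_je^{(\iota^\ast u^j,\xi_i)}\iota^\ast u^j$ is uniformly $C^0$-bounded, so $|\Delta_{g_M}\xi_i|_{C^0}\leq C$. Applying $L^{2m}$-elliptic regularity gives $|\xi_i|_{L^{2m}_2}\leq C$, hence $|\nabla\xi_i|_{L^{2m}}\leq C$, which in turn yields $|\nabla N(\xi_i)|_{L^{2m}}\leq C$. Therefore $|\Delta_{g_M}\xi_i|_{L^{2m}_1}\leq C$, and a final application of elliptic regularity delivers $|\xi_i|_{L^{2m}_3}\leq C$, completing the proof. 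I expect the delicate step to be the Moser iteration in the third paragraph: the exponential nonlinearity gives strong diagonal coercivity, but the possibly indefinite cross terms from the Gram matrix require care in balancing the $L^p$-exponents throughout the iteration.
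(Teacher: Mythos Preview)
Your overall architecture — $L^2$-bound from Lemma~\ref{Key Lemma}, then $C^0$-bound on $F(\xi_i)$ from $L^{2m}_1\hookrightarrow C^0$, then a sup-bound on $\xi_i$, then elliptic bootstrapping to $L^{2m}_3$ — is exactly the paper's route, and your final two paragraphs agree with the paper almost verbatim.

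The gap is in your third paragraph, precisely where you anticipate trouble. Working componentwise forces you to control the cross terms
\[
(\iota^\ast u^j,\iota^\ast u^k)\int_M a_k e^{\phi_{i,k}}(\phi_{i,j}^+)^{2p-1}\,\vol,\qquad k\neq j,
\]
and you propose to absorb them ``using the already established $L^1$-bound on $a_ke^{\phi_{i,k}}$ together with Sobolev and H\"older''. But an $L^1$-bound on $a_ke^{\phi_{i,k}}$ is paired by H\"older only with an $L^\infty$-bound on $(\phi_{i,j}^+)^{2p-1}$, which is exactly what you are trying to prove; for $p>1$ there is no admissible splitting of exponents that closes the iteration with only $L^1$ control on the exponential factor. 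Nor can the diagonal term $a_je^{\phi_{i,j}}(\phi_{i,j}^+)^{2p-1}$ absorb them, since the bad factor $e^{\phi_{i,k}}$ involves a \emph{different} unknown. So the Moser scheme, as sketched, does not go through.

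The paper sidesteps this entirely by never passing to components. It computes
\[
\tfrac{1}{2}\Delta_{g_M}|\xi_i|^2=(\Delta_{g_M}\xi_i,\xi_i)-|d\xi_i|^2
\le\bigl(F(\xi_i),\xi_i\bigr)-\bigl(\textstyle\sum_j a_j\iota^\ast u^j-w,\xi_i\bigr),
\]
where the inequality uses $y\le ye^{y}$ in the form $\sum_j a_j(\iota^\ast u^j,\xi_i)\le \sum_j a_je^{(\iota^\ast u^j,\xi_i)}(\iota^\ast u^j,\xi_i)$. This single vectorial trick makes the cross terms disappear: one gets $\Delta_{g_M}|\xi_i|^2\le C_3|\xi_i|^2+C_3$ directly, and then the packaged sup-estimate of Lemma~\ref{known lemma} (which is itself a Moser iteration, but now for a \emph{scalar} subsolution with no coupling) yields $\max|\xi_i|^2\le C_4(|\xi_i|_{L^2}^2+C_3)$. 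Combined with your $L^2$-bound this gives the $C^0$-bound you need, after which your bootstrap argument is correct. In short: replace your componentwise Moser iteration by the two-line computation above and invoke Lemma~\ref{known lemma}.
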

Before the proof of Lemma \ref{final lemma}, we recall the following:
\begin{lemm}\label{known lemma}{\rm (\cite[pp.72-73]{LT1})} {\it 
Let $f\in C^2(M, \R)$ be a non-negative function. 
If
\begin{align*}
\Delta_{g_M} f \leq C_0 f+C_1
\end{align*}
holds for some $C_0\in \R_{\geq0}$ and $C_1 \in \R$, then there is a positive constant $C_2$, depending only on $g_M$ and $C_0$, such that
\begin{align*}
\max_{x\in M}f(x)\leq C_2(|f|_{L^1}+C_1).
\end{align*}
}
\end{lemm}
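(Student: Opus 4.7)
The plan is to prove the bound by Moser iteration, after reducing to a homogeneous differential inequality. One may assume $C_1 \geq 0$, since otherwise $\Delta_{g_M} f \leq C_0 f$ already holds. Assuming $C_0 > 0$ and $C_1 \geq 0$, set $g := f + C_1/C_0 \geq 0$; then
\begin{align*}
\Delta_{g_M} g = \Delta_{g_M} f \leq C_0 f + C_1 = C_0 g.
\end{align*}
It then suffices to prove $|g|_{L^\infty} \leq C(g_M, C_0)\,|g|_{L^1}$ under this homogeneous hypothesis, since $|g|_{L^1} \leq |f|_{L^1} + C_1/C_0$ (using $\Vol(M,g_M) = 1$) and $\max_M f \leq \max_M g$ then yield the desired $\max_M f \leq C_2(|f|_{L^1} + C_1)$. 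The residual case $C_0 = 0$ follows from the positive case by letting $\epsilon \searrow 0$ in the weaker inequality $\Delta_{g_M} f \leq \epsilon f + C_1$.

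For the homogeneous bound I would carry out Moser iteration. For each $p \geq 2$ I multiply $\Delta_{g_M} g \leq C_0 g$ by the nonnegative test function $g^{p-1}$ and integrate against $\vol$. Self-adjointness of $\Delta_{g_M} = d^\ast d$ and the chain rule give
\begin{align*}
\frac{4(p-1)}{p^2}\int_M \bigl|d(g^{p/2})\bigr|^2 \vol = (p-1)\int_M g^{p-2}|dg|^2 \vol \leq C_0 \int_M g^p \vol.
\end{align*}
Applying the Sobolev inequality $|h|_{L^{2q}}^2 \leq C_S(|dh|_{L^2}^2 + |h|_{L^2}^2)$ to $h = g^{p/2}$ — with $q := m/(m-2)$ when $m \geq 3$, and any fixed $q > 1$ when $m \leq 2$ — and using $p^2/[4(p-1)] \leq p/2$ for $p \geq 2$, I obtain
\begin{align*}
|g|_{L^{pq}}^p \leq C_6\, p\, |g|_{L^p}^p, \qquad C_6 = C_6(g_M, C_0).
\end{align*}

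Iterating with $p_k := 2q^k$ and taking $p_k$-th roots gives $|g|_{L^{p_{k+1}}} \leq (C_6 p_k)^{1/p_k}\,|g|_{L^{p_k}}$. Because $q > 1$, the series $\sum_{k\geq 0} p_k^{-1}\log(C_6 p_k)$ converges, so $C_7 := \prod_k (C_6 p_k)^{1/p_k}$ is finite and depends only on $g_M$ and $C_0$; hence $|g|_{L^\infty} = \lim_k |g|_{L^{p_k}} \leq C_7\,|g|_{L^2}$. The passage from $L^2$ to $L^1$ uses the elementary interpolation $|g|_{L^2}^2 \leq |g|_{L^\infty}\,|g|_{L^1}$: substituting into $|g|_{L^\infty}^2 \leq C_7^2\,|g|_{L^2}^2$ gives $|g|_{L^\infty} \leq C_7^2\,|g|_{L^1}$ (the trivial case $|g|_{L^\infty}=0$ being harmless). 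Translating back to $f$ yields the lemma with $C_2 = C_2(g_M, C_0)$. The main technical subtlety is tracking the growth of the Moser-iteration constant so that $C_2$ is independent of $p$: this works precisely because the factor $p-1$ produced by integration by parts cancels the $p^2$ from the chain rule up to a harmless $p$, keeping the per-step constant polynomial in $p$ and making $\sum_k p_k^{-1}\log p_k$ summable.
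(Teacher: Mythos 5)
Your proof is correct, and it is genuinely different from what the paper does: the paper offers no argument for this lemma at all, relying entirely on the citation \cite[pp.72--73]{LT1}, where the estimate is derived from properties of the Green's function of $\Delta_{g_M}$ on a compact manifold. Your Moser iteration is a self-contained alternative, and the bookkeeping is right: the substitution $g=f+C_1/C_0$ reduces to the homogeneous inequality $\Delta_{g_M}g\leq C_0 g$; integrating against $g^{p-1}$ produces the factor $4(p-1)/p^2$ in front of $\int_M|d(g^{p/2})|^2\,\vol$, which cancels the loss in the Sobolev step up to a single power of $p$, so that $\sum_k p_k^{-1}\log(C_6p_k)$ converges along $p_k=2q^k$ and the limiting constant depends only on $g_M$ and $C_0$; and the interpolation $|g|_{L^2}^2\leq|g|_{L^\infty}|g|_{L^1}$ correctly downgrades the $L^2$ input to the required $L^1$ bound. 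What your route buys is independence from the Green's function machinery, at the cost of carrying the iteration; either is standard.

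Two small points. First, the case $C_0=0$ should not be phrased as a limit $\epsilon\searrow0$: your constant contains a factor $\max(1,1/\epsilon)$ from $|g|_{L^1}=|f|_{L^1}+C_1/\epsilon$ and blows up as $\epsilon\to0$. Since $f\geq0$, the hypothesis with $C_0=0$ already implies the one with $C_0=1$, so simply invoke the positive case at $\epsilon=1$. Second, your reduction to $C_1\geq0$ yields $\max_Mf\leq C_2|f|_{L^1}$ when $C_1<0$, which is not literally the stated inequality, whose right-hand side is then smaller and can even be negative; but the lemma as stated is in fact false for sufficiently negative $C_1$ (take $f\equiv1$, $C_0=2$, $C_1=-2$), so it must be read with $C_1\geq0$, which is all the paper ever uses (in the proof of Lemma \ref{final lemma} one has $C_1=C_3\geq0$).
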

\begin{proof}[Proof of Lemma \ref{final lemma}] We first note that the functional space $L^{2m}_3(M, k^\ast)$ is contained in $C^2(M,k^\ast)$. We then have the following for each $i\in\N$:
\begin{align}
\frac{1}{2}\Delta_{g_M}|\xi_i|^2&=(\Delta_{g_M} \xi_i, \xi_i)-|d\xi_i|^2 \notag \\
&\leq (\Delta_{g_M}\xi_i+\sum_{j=1}^da_j\iota^\ast u^j-w, \xi_i)-(\sum_{j=1}^da_j\iota^\ast u^j-w, \xi_i) \notag \\
&\leq (\Delta_{g_M}\xi_i+\sum_{j=1}^da_je^{(\iota^\ast u^j, \xi_i)}\iota^\ast u^j-w, \xi_i)-(\sum_{j=1}^da_j\iota^\ast u^j-w, \xi_i), \label{final inequality}
\end{align}
where we have used the following inequality:
\begin{align*}
y\leq ye^y \ \text{ for any $y\in\R$.}
\end{align*}
From (\ref{final inequality}), we have 
\begin{align}
&\frac{1}{2}\Delta_{g_M}|\xi_i|^2 \notag \\
&\leq |\Delta_{g_M}\xi_i+\sum_{j=1}^da_je^{(\iota^\ast u^j, \xi_i)}\iota^\ast u^j-w||\xi_i|+|\sum_{j=1}^da_j\iota^\ast u^j-w||\xi_i| \notag \\
&\leq C_3|\xi_i|, \label{inequality}
\end{align}
for a constant $C_3$, since we have $L^{2m}_1(M, k^\ast)\subseteq C^0(M, k^\ast)$ and the following:
\begin{align*}
|\Delta_\gM \xi_i+\sum_{j=1}^d a_je^{(\iota^\ast u^j, \xi_i)}\iota^\ast u^j -w|_{L^{2m}_1}^{2m}\leq B. 
\end{align*}
From (\ref{inequality}) and an inequality $|\xi_i|\leq \frac{1}{2}(|\xi_i|^2+1)$ we have the following:
\begin{align*}
\Delta_\gM|\xi_i|^2\leq C_3|\xi_i|^2+C_3.
\end{align*}
Then from Lemma \ref{known lemma}, there exists a constant $C_4$ such that
\begin{align*}
\max_{x\in M}\{|\xi_i|^2(x)\}\leq C_4(||\xi_i|^2|_{L^1}+C_3)=C_4(|\xi_i|^2_{L^2}+C_3).
\end{align*}
Combining Lemma \ref{Key Lemma}, we see that there exists a constant $C_5$ which does not depend on $i$ such that
\begin{align}
\max_{x\in M}\{|\xi_i|^2(x)\}\leq C_5. \label{max}
\end{align}
From (\ref{max}) and the $L^p$-estimate we have 
\begin{align}
|\xi_i|_{L^{2m}_2}
&\leq C_6|\Delta_\gM \xi_i|_{L^{2m}}+C_7|\xi_i|_{L^1} \notag \\
&\leq  C_6|\Delta_\gM \xi_i+\sum_{j=1}^da_je^{(\iota^\ast u^j, \xi_i)}\iota^\ast u^j-w|_{L^{2m}} \notag \\
&+C_6|\sum_{j=1}^da_je^{(\iota^\ast u^j, \xi_i)}\iota^\ast u^j-w|_{L^{2m}} +C_8 \notag \\
&\leq  C_9 \label{inequality2}
\end{align}
for some constants $C_6, C_7, C_8$ and $C_9$. Then we have the result since we can repeat the same argument for $|\xi_i|_{L^{2m}_3}$ as in the proof of inequality (\ref{inequality2}).
\end{proof}

\begin{cor}
{\it
The functional $E$ has a critical point.
}
\end{cor}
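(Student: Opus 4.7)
The plan is to produce a minimizer of $E$ on the constrained set $L^{2m}_3(M, k^\ast)_B$ for a suitably chosen $B$, and then invoke Lemma \ref{local minimum} to conclude that this minimizer is automatically a critical point of $E$ on the whole space.

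First I would fix $B>0$ large enough so that $L^{2m}_3(M, k^\ast)_B$ is nonempty (e.g. take any fixed $\xi_\ast\in L^{2m}_3(M, k^\ast)$ and choose $B>|F(\xi_\ast)|_{L^{2m}_1}^{2m}$, where $F$ is the map defined in the proof of Lemma \ref{local minimum}). Since $E$ is bounded below by Lemma \ref{Key Lemma}, the infimum $I_B:=\inf_{\eta\in L^{2m}_3(M, k^\ast)_B}E(\eta)$ is finite, so a minimizing sequence $(\xi_i)_{i\in\N}\subseteq L^{2m}_3(M, k^\ast)_B$ with $E(\xi_i)\to I_B$ exists. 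By Lemma \ref{final lemma}, $\sup_{i}|\xi_i|_{L^{2m}_3}<\infty$.

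Next I would extract, by the Banach--Alaoglu theorem, a subsequence (still denoted $\xi_i$) that converges weakly in $L^{2m}_3(M, k^\ast)$ to some $\xi_0$. By the Rellich--Kondrachov compact embedding $L^{2m}_3\hookrightarrow L^{2m}_2\hookrightarrow C^1$ (valid on the $m$-dimensional compact manifold $M$), the convergence $\xi_i\to\xi_0$ may be arranged to be strong in $C^1$. This is enough to pass to the limit in the exponential and lower-order terms: $a_j e^{(\iota^\ast u^j,\xi_i)}\iota^\ast u^j\to a_j e^{(\iota^\ast u^j,\xi_0)}\iota^\ast u^j$ strongly in $L^{2m}_1$, while $\Delta_{g_M}\xi_i\rightharpoonup \Delta_{g_M}\xi_0$ weakly in $L^{2m}_1$. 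Hence $F(\xi_i)\rightharpoonup F(\xi_0)$ weakly in $L^{2m}_1$, and weak lower semicontinuity of the norm gives
\begin{align*}
|F(\xi_0)|_{L^{2m}_1}^{2m}\leq \liminf_{i\to\infty}|F(\xi_i)|_{L^{2m}_1}^{2m}\leq B,
\end{align*}
so $\xi_0\in L^{2m}_3(M, k^\ast)_B$.

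It then remains to show $E(\xi_0)=I_B$. The Dirichlet term $\tfrac{1}{2}\int_M(d\xi, d\xi)\,\vol$ is weakly lower semicontinuous on $L^{2m}_3$ (it is a continuous convex functional in the weak topology of $L^{2m}_1$, hence weakly lsc), the linear term $-\int_M(w,\xi)\,\vol$ passes to the limit by the $C^0$ convergence of $\xi_i$, and the exponential terms converge by the same $C^0$ convergence. Therefore $E(\xi_0)\leq \liminf_i E(\xi_i)=I_B$, and by definition $E(\xi_0)\geq I_B$, so $E(\xi_0)=I_B$ and $E|_{L^{2m}_3(M, k^\ast)_B}$ attains its minimum at $\xi_0$. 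Lemma \ref{local minimum} then concludes that $\xi_0$ is a critical point of $E$, which together with Lemma \ref{critical point} gives a smooth solution of (\ref{KWequation}). The main technical obstacle here is the closedness of the constraint set under weak convergence; this is what forces one to use the compact embedding into $C^1$, so that the nonlinear term in $F$ converges strongly and does not spoil the weak lower semicontinuity inherited from the linear (Laplacian) part.
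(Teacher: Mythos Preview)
Your proof is correct and follows essentially the same route as the paper's: extract a weakly convergent subsequence of a minimizing sequence in $L^{2m}_3(M,k^\ast)_B$ using Lemma \ref{final lemma}, show the weak limit stays in the constraint set and realizes the infimum, and then apply Lemma \ref{local minimum}. In fact you supply details the paper leaves implicit, namely the verification via Rellich--Kondrachov and weak lower semicontinuity that the limit $\xi_0$ actually lies in $L^{2m}_3(M,k^\ast)_B$, whereas the paper simply asserts $\xi_\infty\in L^{2m}_3(M,k^\ast)_B$ and that $E$ is ``continuous with respect to the weak topology''.
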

\begin{proof}
We take a sequence $(\xi_i)_{i\in\N}$ so that
\begin{align*}
\lim_{i\to\infty}E(\xi_i)=\inf_{\eta\in L^{2m}_3(M, k^\ast)_B}E(\eta).
\end{align*}
Then by Lemma \ref{final lemma} we have $\sup_{i\in\N}|\xi_i|_{L^{2m}_3}<\infty$, and this implies that there exits a subsequence $(\xi_{i_j})_{j\in\N}$ such that $(\xi_{i_j})_{j\in\N}$ weakly converges a $\xi_\infty\in L^{2m}_3(M, k^\ast)_B$. Since the functional $E$ is continuous with respect to the weak topology, we have 
\begin{align}
E(\xi_\infty)=\inf_{\eta\in L^{2m}_3(M, k^\ast)_B}E(\eta). \label{xiinfty}
\end{align}
(\ref{xiinfty}) says that $E|_{L^{2m}_3(M, k^\ast)_B}$ attains a minimum at $\xi_\infty$ and thus from Lemma \ref{local minimum} we have the result.
\end{proof}
\appendix
\section{Geometric invariant theory and the moment maps for linear torus actions}
We give a brief review of the relationship between the geometric invariant theory and the moment maps for linear torus actions. In particular, we clarify the relationship between condition (\ref{an important condition}) and the stability condition of the geometric invariant theory. General references for this section are \cite{Dol1}, \cite{Kin1}, \cite{Kir1}, \cite{KN1}, \cite{MFK1}, \cite{Nak1} and \cite{New1}.
\subsection{Notation}
We first fix our notation. Let $K$ be a closed connected subtorus of a real torus $T^d:=\U(1)^d$ with the Lie algebra $k\subseteq t^d$. We denote by $\iota^\ast: (t^d)^\ast \rightarrow k^\ast$ the dual map of the inclusion map $\iota: k\rightarrow t^d$. Let $u_1,\dots, u_d$ be a basis of $t^d$ defined by
\begin{align*}
u_1\coloneqq&(\inum, 0, \dots, 0), \\
u_2\coloneqq&(0, \inum, 0,\dots, 0), \\
&\cdots \\
u_d\coloneqq&(0,\dots, 0,\inum).
\end{align*}
We denote by $u^1, \dots, u^d\in (t^d)^\ast$ the dual basis of $u_1, \dots, u_d$. Let $(\cdot, \cdot)$ be the metric on $t^d$ and $(t^d)^\ast$ satisfying
\begin{align*}
(u_i, u_j)=(u^i, u^j)=\delta_{ij} \ \ \text{for all $i, j$,}
\end{align*}
 where $\delta_{ij}$ denotes the Kronecker delta. The diagonal action of $T^d$ on $\C^d$ induces an action of $K$ which preserves the K\"ahler structure of $\C^d$. Let $\mu_K:\C^d\rightarrow k^\ast$ be a moment map for the action of $K$ which is defined by
\begin{align*}
\langle \mu_K(z), v \rangle=\frac{1}{2}g_{\R^{2d}}(\inum vz, z) \ \ \text{for $v\in k$},
\end{align*}
where we denote by $g_{\R^{2d}}(\cdot, \cdot)$ the standard metric of $\C^d\simeq \R^{2d}$, and by $\langle \cdot, \cdot \rangle$ the natural coupling. The moment map $\mu_K$ is also denoted as
\begin{align*}
\mu_K(z)=-\frac{1}{2}\sum_{j=1}^d\iota^\ast u^j |z_j|^2 \ \ \text{for $z=(z_1,\dots, z_d)\in\C^d$}.
\end{align*}
Let $T^d_\C\coloneqq (\C^\ast)^d$ be the complexification of $T^d$. We define the exponential map $\Exp: t^d\oplus \inum t^d\rightarrow T^d_\C$ by
\begin{align*}
\Exp(v+\inum v^\prime)=(e^{\inum\langle v+\inum v^\prime, u^1\rangle}, \dots, e^{\inum\langle v+\inum v^\prime, u^d\rangle}).
\end{align*}
We denote by $K_\C$ the complexification of $K$. Let $k_\Z\subseteq k$ be $\ker \Exp |_k$ and $(k_\Z)^\ast$ the dual. Note that $(k_\Z)^\ast$ is naturally identified with $\sum_{j=1}^d\Z \ (\iota^\ast u^j/2\pi)$. For each $\alpha\in (k_\Z)^\ast$, we define a character $\chi_\alpha : K_\C\rightarrow \C^\ast$ by
\begin{align*}
\chi_\alpha(\Exp(v+\inum v^\prime))=e^{2\pi\inum\langle v+\inum v^\prime, \alpha\rangle}.
\end{align*}

\subsection{Symplectic and GIT quotients }
Let $\alpha\in(k_\Z)^\ast$. We define an action of $K_\C$ on $\C^d\times \C$ by 
\begin{align*}
g\cdot (z, v)\coloneqq(gz, \chi_\alpha(g)^{-1}v) \ \ \text{for $(z, v)\in\C^d\times\C$.}
\end{align*}
Let $R_\alpha$ be the invariant ring for the above action:
\begin{align*}
R_\alpha\coloneqq\{\hat{f}(x, y)\in\C[x_1,\dots, x_d, y]\mid\text{$\hat{f}(g\cdot(x,y))=\hat{f}(x,y)$ for all $g\in K_\C$} \}.
\end{align*} 
By a theorem of Nagata, $R_\alpha$ is finitely generated. For each $n\in\Z_{\geq0}$ let $R_{\alpha, n}$ be a space of polynomials defined by
\begin{align*}
R_{\alpha, n}\coloneqq\{f(x)\in\C[x_1,\dots, x_d]\mid \text{$f(gx)=\chi_\alpha(g)^nf(x)$ for all $g\in K_\C$}\}.
\end{align*}
 Then $R_\alpha$ is naturally identified with $\bigoplus_{n\geq0}R_{\alpha, n}$.
\begin{defi}
Define $\C^d\sslash_\alpha K_\C\coloneqq{\rm Proj}(\bigoplus_{n\geq0}R_{\alpha, n})$. This is called the {\it geometric invariant theory (GIT) quotient.}
\end{defi}
\begin{defi}
We say $z\in\C^d$ is {\it $\alpha$-semistable} if there exists an $f(x)\in R_{\alpha, n}$ with $n\in\Z_{>0}$ such that $f(z)\neq0$. We denote by $(\C^d)^{\alpha-ss}$ the set of all $\alpha$-semistable points.
\end{defi}
We refer the reader to \cite{Mum1} for a proof of the following Proposition \ref{Zariski closed}:
\begin{prop}\label{Zariski closed}
{\it 
Let $V$ be a complex vector space and $G\subseteq \GL(V)$ an algebraic subgroup.  Then we have the following:
\begin{align*}
\overline{\overline{G\cdot p}}=\overline{G\cdot p} \ \ \text{for all $p\in V$},
\end{align*}
where we denote by $\overline{\overline{G\cdot p}}$ the Euclidean closure, and by $\overline{G\cdot p}$ the Zariski closure. In particular, $G\cdot p$ is closed with respect to the Euclidean topology if and only if it is closed with respect to the Zariski topology.
}
\end{prop}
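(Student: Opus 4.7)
The inclusion $\overline{\overline{G\cdot p}}\subseteq \overline{G\cdot p}$ is automatic, since the Zariski closure, being the zero set of a collection of polynomials, is already closed in the Euclidean topology. The real content of the proposition is the reverse inclusion, and my plan is to establish it by showing that $G\cdot p$ itself is already Euclidean-dense in its Zariski closure.

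The first step is to observe that $G\cdot p$ is a constructible subset of $V$: this is Chevalley's theorem applied to the orbit morphism $G\to V$, $g\mapsto g\cdot p$, whose image is $G\cdot p$. A standard refinement for algebraic group actions (orbits are locally closed) further gives that $G\cdot p$ is Zariski-open in $\overline{G\cdot p}$. Decomposing $G=\bigsqcup_j g_j G^0$ into finitely many cosets of its identity component and using that $G^0$ is irreducible as a variety, one sees that each $\overline{g_j\cdot(G^0\cdot p)}$ is an irreducible closed subvariety, and these are the (possibly repeated) irreducible components of $\overline{G\cdot p}$.

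The decisive input is the classical density statement: in an irreducible complex algebraic variety $Z$, every nonempty Zariski-open subset $U\subseteq Z$ is Euclidean-dense in $Z$. The proof proceeds via smoothness: at any smooth point $z\in Z$, a Zariski neighborhood is biholomorphic to an open subset of $\C^{\dim Z}$, in which the proper Zariski-closed subset $Z\setminus U$ is cut out by nonzero holomorphic functions and therefore has empty Euclidean interior; one then extends to singular points via a standard normalization or dimension argument. Applying this component by component to $\overline{G\cdot p}$, taking $U=G\cdot p\cap Z_j$ on each irreducible component $Z_j$, yields that $G\cdot p$ is Euclidean-dense in $\overline{G\cdot p}$, which is the desired inclusion. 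The ``in particular'' statement is then formal: if $G\cdot p$ is Euclidean-closed, it equals $\overline{\overline{G\cdot p}}=\overline{G\cdot p}$, so it is Zariski-closed, and the converse is the trivial initial inclusion.

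The main obstacle in this plan is the classical density statement in the third paragraph: it is the only step that uses the interplay between the algebraic and analytic topologies on a complex variety, and everything else is formal bookkeeping using Chevalley's theorem, local closedness of orbits, and the decomposition of $G$ into cosets of $G^0$. I would cite this density fact from a standard reference (e.g.\ Mumford or Shafarevich) rather than reprove it, so the substantive work in the proof lies in assembling these ingredients and handling the non-connected case via irreducible components.
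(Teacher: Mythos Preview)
The paper does not give its own proof of this proposition; it simply refers the reader to Mumford's \emph{Red Book}. Your sketch is the standard argument one finds in such references: use local closedness of orbits and the decomposition of $G$ into $G^0$-cosets to reduce to the statement that a nonempty Zariski-open subset of an irreducible complex variety is Euclidean-dense. The argument is correct, and the density statement you flag as the main obstacle is indeed the only place where the complex ground field genuinely enters; citing it rather than reproving it is entirely in the spirit of the paper's own treatment.
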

The GIT quotient can be described as follows:
\begin{prop}\label{morphism}
{\it There exists a categorical quotient $\phi:(\C^d)^{\alpha-ss}\rightarrow \C^d\sslash_\alpha K_\C$ which satisfies the following properties. For each $z, z^\prime\in(\C^d)^{\alpha-ss}$, $\phi(z)=\phi(z^\prime)$ holds if and only if $\overline{K_\C\cdot z}\cap\overline{K_\C\cdot z^\prime}\cap(\C^d)^{\alpha-ss}\neq\emptyset$ and further for each $q\in \C^d \sslash_\alpha K_\C$, $\phi^{-1}(q)$ contains a unique $K_\C$-orbit which is closed in $(\C^d)^{\alpha-ss}$. 
}
\end{prop}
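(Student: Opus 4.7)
The plan is to construct $\phi$ via the standard Proj construction on the finitely generated graded algebra $R_\alpha$, and then reduce the orbit-separation and unique-closed-orbit statements to the classical affine GIT for the reductive torus $K_\C$ acting on a suitable cone, using Proposition \ref{Zariski closed} to pass freely between Zariski and Euclidean closures.

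First, I would build $\phi$ explicitly. By definition of semistability, the basic open sets $U_f := \{z \in \C^d : f(z) \neq 0\}$ with $f \in R_{\alpha, n}$ and $n \geq 1$ form an open cover of $(\C^d)^{\alpha-ss}$. For each such $f$, define $\phi_f : U_f \to {\rm Spec}((R_\alpha)_{(f)})$ by evaluating the degree zero localized invariants $h/f^k$ with $h \in R_{\alpha, kn}$. Since ${\rm Spec}((R_\alpha)_{(f)})$ is the standard affine chart of ${\rm Proj}(R_\alpha) = \C^d \sslash_\alpha K_\C$ associated with $f$, a routine check on the overlaps $U_f \cap U_g = U_{fg}$ shows that the $\phi_f$ glue to a global morphism $\phi$, which is manifestly $K_\C$-invariant and categorical.

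Second, I would introduce the auxiliary affine cone $Y := \C^d \times \C$ with the twisted $K_\C$-action $g \cdot (z, v) = (gz, \chi_\alpha(g)^{-1} v)$, and identify $\C[Y]^{K_\C}$ with $R_\alpha$ via $h(z)\,v^n \leftrightarrow h \in R_{\alpha, n}$. Applying classical affine GIT for the reductive group $K_\C$ acting on $Y$, the quotient map $\pi : Y \to {\rm Spec}(R_\alpha)$ satisfies (a) $\pi(p) = \pi(p')$ iff $\overline{K_\C \cdot p} \cap \overline{K_\C \cdot p'} \neq \emptyset$ and (b) each fiber of $\pi$ contains a unique closed $K_\C$-orbit; both are proved by using the Reynolds operator of $K_\C$ to separate disjoint closed invariant subvarieties by invariant regular functions, and by Proposition \ref{Zariski closed} it does not matter whether the closures are Zariski or Euclidean. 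The projective quotient ${\rm Proj}(R_\alpha)$ is then the categorical quotient of ${\rm Spec}(R_\alpha) \setminus \{0\}$ by the natural $\C^\ast$-action rescaling $v$, which lets me translate any statement about $\phi$ into a pair of statements about $\pi$ and this $\C^\ast$-action: for $z, z' \in (\C^d)^{\alpha-ss}$ and any $v, v' \in \C^\ast$, the equality $\phi(z) = \phi(z')$ is equivalent to the existence of $\lambda \in \C^\ast$ with $\pi(z, v) = \pi(z', \lambda v')$, hence by (a) to $\overline{K_\C \cdot (z, v)} \cap \overline{K_\C \cdot (z', \lambda v')} \neq \emptyset$ in $Y$, and uniqueness of a closed orbit in each fiber of $\phi$ descends from (b) similarly.

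The main obstacle is upgrading the raw intersection $\overline{K_\C \cdot z} \cap \overline{K_\C \cdot z'} \neq \emptyset$ in $\C^d$ to the sharper semistable-locus intersection appearing in the statement. For this I would verify that semistability forces the $K_\C$-orbit closure in $Y$ of any lift $(z, v)$ with $v \neq 0$ to stay away from the unstable divisor $\{v = 0\} \subset Y$: if a limit point of this orbit closure had zero $v$-coordinate, then every $f \in R_{\alpha, n}$, viewed as the invariant $f(z) v^n$ on $Y$, would vanish at that limit and hence on all of $\overline{K_\C \cdot (z, v)}$, giving $f(z) v^n = 0$ and contradicting $z \in (\C^d)^{\alpha-ss}$. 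Therefore any intersection point arising from step two has nonzero $v$-coordinate and projects to a point of $(\C^d)^{\alpha-ss}$, delivering the required element of $\overline{K_\C \cdot z} \cap \overline{K_\C \cdot z'} \cap (\C^d)^{\alpha-ss}$ and completing the argument.
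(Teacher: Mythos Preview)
The paper does not actually prove this proposition: its entire proof reads ``See \cite{Dol1, MFK1, New1}.'' Your proposal, by contrast, supplies a genuine sketch of the standard GIT argument that those references contain. The strategy you outline---gluing $\phi$ from affine charts of ${\rm Proj}(R_\alpha)$, lifting to the affine cone $Y=\C^d\times\C$ with the $\chi_\alpha$-twisted action, invoking affine GIT for the reductive group $K_\C$ on $Y$, and then descending along the residual $\C^\ast$-action---is exactly the route taken in, e.g., Mumford--Fogarty--Kirwan and Dolgachev, and it is correct. In particular your key observation, that for $z$ semistable and $v\neq 0$ the orbit closure $\overline{K_\C\cdot(z,v)}$ avoids $\{v=0\}$ because any nonvanishing $f\in R_{\alpha,n}$ gives the invariant $f(\cdot)\,v^n$ taking a nonzero constant value on that closure, is precisely the content of the equivalence $(\ref{semi1})\Leftrightarrow(\ref{semi4})$ in the paper's Proposition~\ref{semistable}; this also shows that any point $(w,u)$ in such a closure has $w$ semistable, which is what you need to land in $(\C^d)^{\alpha-ss}$.

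One small point worth making explicit: the converse direction of the orbit-separation statement (if the semistable closures meet then $\phi$ agrees) follows immediately from continuity and $K_\C$-invariance of $\phi$, and you should say so rather than leaving it folded into the ``translation''. Otherwise the sketch is sound and more informative than the paper's own treatment.
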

\begin{proof}
See \cite{Dol1}, \cite{MFK1} and \cite{New1}.
\end{proof}
We define an equivalence relation $\sim$ on $(\C^d)^{\alpha-ss}$ as follows:
\begin{align*}
z\sim z^\prime \Longleftrightarrow\overline{K_\C\cdot z}\cap\overline{K_\C\cdot z^\prime}\cap(\C^d)^{\alpha-ss}\neq\emptyset \ \ \text{for $z, z^\prime\in(\C^d)^{\alpha-ss}$}.
\end{align*}
Then by Proposition \ref{morphism}, $\C^d\sslash_\alpha K_\C$ is identified with $(\C^d)^{\alpha-ss}/\sim$. Moreover for each equivalent class there exists a $z\in(\C^d)^{\alpha-ss}$ such that $K_\C\cdot z=(\C^d)^{\alpha-ss}\cap \overline{K_\C\cdot z}$ and such a $z$ is unique up to a transformation of $K_\C$. 

$\alpha$-semistable points are characterized as follows:
\begin{prop}\label{semistable}
{\it
The following are equivalent for each $z\in\C^d$:
\begin{enumerate}[(1)]
\item \label{semi1} $z$ is $\alpha$-semistable;
\item \label{semi2} $\alpha$ satisfies the following:
\begin{align*}
\alpha\in\sum_{j\in J_z}\Q_{\geq0}(\iota^\ast u^j/2\pi),
\end{align*}
where $J_z$ denotes $\{j\in\{1, \dots, d\}\mid z_j\neq 0\}$;
\item \label{semi3}
$\alpha$ is in the cone generated by $(\iota^\ast u^j/2\pi)_{j\in J_z}$:
\begin{align*}
\alpha\in\sum_{j\in J_z}\R_{\geq0}(\iota^\ast u^j/2\pi);
\end{align*}
\item \label{semi4}For each $v\in \C\backslash\{0\}$, $\overline{K_\C\cdot(z, v)}$ does not intersect with $\C^d\times \{0\}$.
\end{enumerate}
}
\end{prop}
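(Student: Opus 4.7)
My plan is to establish the four conditions as equivalent by proving (1)$\iff$(2), (2)$\iff$(3), and (1)$\iff$(4) independently; each admits a clean self-contained argument. The first is algebraic (weight-space decomposition), the second is convex-geometric (rationality of polyhedral cones), and the third is the standard GIT separation argument for reductive groups.

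For (1)$\iff$(2), the polynomial ring $\C[x_1,\dots,x_d]$ decomposes into $K_\C$-weight spaces spanned by monomials, with $x^m = x_1^{m_1}\cdots x_d^{m_d}$ carrying weight $\sum_{j=1}^d m_j \iota^\ast u^j \in k^\ast$. Thus $R_{\alpha,n}$ is spanned by those $x^m$ with $m_j \in \Z_{\geq 0}$ and $\sum_j m_j \iota^\ast u^j = 2\pi n\alpha$. If $f \in R_{\alpha,n}$ with $n>0$ satisfies $f(z)\neq 0$, then at least one such monomial must have support contained in $J_z$, yielding the expression $\alpha = \sum_{j\in J_z}(m_j/n)(\iota^\ast u^j/2\pi)$ required by (2); conversely, clearing denominators in such a $\Q_{\geq 0}$-expression manufactures a monomial lying in $R_{\alpha,n}$ for some $n\in\Z_{>0}$ and not vanishing at $z$. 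For (2)$\iff$(3), the forward direction is immediate, and for the converse I would apply Carath\'eodory's theorem for cones to write $\alpha$ as a non-negative real combination of an $\R$-linearly independent subfamily of $\{\iota^\ast u^j/2\pi\}_{j\in J_z}$; since the generators lie in the rational lattice $(k_\Z)^\ast$ and so does $\alpha$, the coefficients satisfy a full-rank linear system over $\Q$ with rational right-hand side and are therefore rational.

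For (1)$\iff$(4), the direction (1)$\Rightarrow$(4) follows by forming the $K_\C$-invariant polynomial $\hat f(x,y)\coloneqq f(x)y^n$ on $\C^d\times\C$, which vanishes identically on $\C^d\times\{0\}$ but takes the nonzero value $f(z)v^n$ on the orbit $K_\C\cdot(z,v)$ and hence on its closure. For (4)$\Rightarrow$(1), I would use that $K_\C$ is a complex torus and therefore reductive, so disjoint closed $K_\C$-invariant subvarieties of $\C^d\times\C$ can be separated by an invariant regular function; Proposition \ref{Zariski closed} is needed here to identify the Euclidean closure $\overline{K_\C\cdot(z,v)}$ with its Zariski closure, so that the algebraic separation statement applies to the disjoint closed invariant sets $\overline{K_\C\cdot(z,v)}$ and $\C^d\times\{0\}$. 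Writing the resulting separating invariant as $\hat f = \sum_n f_n(x)y^n$ with $f_n \in R_{\alpha,n}$, the vanishing of $\hat f$ on $\{y=0\}$ forces $f_0\equiv 0$, while $\hat f(z,v)\neq 0$ then forces some $f_n(z)\neq 0$ with $n\geq 1$, establishing $\alpha$-semistability. I expect the main technical obstacle to be precisely this (4)$\Rightarrow$(1) step, which hinges on the classical separation-by-invariants theorem for reductive group actions together with the closure identification from Proposition \ref{Zariski closed}; the remaining implications reduce to weight-space bookkeeping and a rational-cone lemma.
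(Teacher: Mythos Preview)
Your proposal is correct and follows essentially the same route as the paper: the paper also proves (1)$\Leftrightarrow$(2) by the monomial/weight argument (citing \cite{Kon1}), (2)$\Leftrightarrow$(3) via polyhedral cone theory, and (1)$\Leftrightarrow$(4) by constructing $\hat f(x,y)=y^nf(x)$ in one direction and invoking separation of disjoint closed invariant sets by an invariant polynomial in the other. Your write-up is in fact more explicit than the paper's in spelling out the monomial bookkeeping and the role of Proposition~\ref{Zariski closed} in the (4)$\Rightarrow$(1) step.
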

\begin{proof}
$(\ref{semi1})\Leftrightarrow(\ref{semi2})$ This can be proved by the same argument as in the proof of \cite[Lemma 3.4]{Kon1}. 

$(\ref{semi2})\Leftrightarrow (\ref{semi3})$ This follows from the general theory of polyhedral convex cones. See \cite{Ful1}. 

 $(\ref{semi1})\Rightarrow (\ref{semi4})$ Suppose $z$ is $\alpha$-semistable. We take an $f\in R_{n,\alpha}$ such that $n\in \Z_{>0}$ and $f(z)\neq 0$. We define a polynomial $\hat{f}(x, y)$ by $\hat{f}(x, y)\coloneqq y^nf(x)$. Then we have the following:
 \begin{align*}
 \hat{f}(x, y)|_{\overline{K_\C\cdot(z, v)}}&\equiv v^nf(z), \\
 \hat{f}(x, y)|_{\C^d\times\{0\}}&\equiv 0,
 \end{align*}
 and thus (\ref{semi4}) holds.
 
 $(\ref{semi4})\Rightarrow (\ref{semi1})$ Suppose (\ref{semi4}) holds. Then there exists a polynomial $\hat{f}(x, y)$ such that
 \begin{align*}
 \hat{f}(x, y)|_{\overline{K_\C\cdot(z, v)}}&\equiv 1, \\
 \hat{f}(x, y)|_{\C^d\times\{0\}}&\equiv 0.
 \end{align*}
The polynomial $\hat{f}(x, y)$ can be written as $\hat{f}(x, y)=yf_1(x)+\cdots+y^mf_m(x)$. Take an $n\in\{1, \dots, m\}$ such that $f_n(x)\neq 0$. Then we have $f_n\in R_{n,\alpha}$ and $f_n(z)\neq 0$.
\end{proof}
The closed orbits are characterized as follows:
\begin{prop}\label{closed orbits}
{\it
The following are equivalent for each $z\in\C^d$:
\begin{enumerate}[(1)]
\item \label{1}$z$ is $\alpha$-semistable and the $K_\C$-orbit is closed in $(\C^d)^{\alpha-ss}$: 
\begin{align*}
K_\C\cdot z=\overline{K_\C\cdot z}\cap (\C^d)^{\alpha-ss};
\end{align*}
\item \label{2}
$\alpha$ satisfies the following:
\begin{align*}
\alpha\in\sum_{j\in J_z}\Q_{>0}(\iota^\ast u^j/2\pi);
\end{align*}
\item \label{3}
$\alpha$ is in the interior of the cone generated by $(\iota^\ast u^j/2\pi)_{j\in J_z}$:
\begin{align*}
\alpha\in\sum_{j\in J_z}\R_{>0}(\iota^\ast u^j/2\pi);
\end{align*}
\item \label{4}
The following holds:
\begin{align*}
\sum_{j\in J_z}\R(\iota^\ast u^j/2\pi)=\sum_{j\in J_z}\R_{\geq0}(\iota^\ast u^j/2\pi)+\R_{\geq0}(-\alpha);\end{align*}
\item \label{5}For each $v\in \C\backslash\{0\}$, $K_\C\cdot (z, v)$ is closed;
\item \label{6}
The following holds:
\begin{align*}
\mu_K^{-1}(-\alpha)\cap K_\C\cdot z\neq \emptyset.
\end{align*}
\end{enumerate}
}
\end{prop}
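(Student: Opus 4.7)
The strategy is to group the six conditions into three blocks and prove the equivalences within each, then bridge them via the crucial equivalence $(3)\Leftrightarrow(6)$: a convex-geometric block $\{(2),(3),(4)\}$, a GIT block $\{(1),(5),(6)\}$ connected through Kempf--Ness, and finally the bridge $(3)\Leftrightarrow(6)$.

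For the convex-geometric equivalences, $(2)\Leftrightarrow(3)$ is the density of rational points in the relative interior of a rational polyhedral cone: the polyhedron of nonnegative representations $\alpha=\sum c_j(\iota^\ast u^j/2\pi)$ is rational, so it has rational interior points whenever it has real ones. For $(3)\Leftrightarrow(4)$, set $C:=\sum_{j\in J_z}\R_{\geq 0}(\iota^\ast u^j/2\pi)$ and let $L$ be its real span; then $\alpha$ lies in $\operatorname{relint}C$ iff every $v\in L$ admits some $t\geq 0$ with $v+t\alpha\in C$, which rearranges to $L=C+\R_{\geq 0}(-\alpha)$.

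For the GIT block, $(1)\Leftrightarrow(5)$ follows from the $K_\C$-equivariant projection $(z,v)\mapsto z$: Proposition~\ref{semistable}(4) identifies $\alpha$-semistability with $\overline{K_\C\cdot(z,v)}\cap(\C^d\times\{0\})=\emptyset$, so closedness of $K_\C\cdot(z,v)$ in $\C^d\times\C$ is equivalent to $z\in(\C^d)^{\alpha-ss}$ together with closedness of $K_\C\cdot z$ in $(\C^d)^{\alpha-ss}$. For $(1)\Leftrightarrow(6)$, this is the Kempf--Ness theorem for the abelian $K_\C$-action on $\C^d$ linearized by the character $\chi_\alpha$: the effect of the linearization on moment map theory is to shift the moment map by $\alpha$, so the orbits closed in the $\alpha$-semistable locus are precisely those meeting $\mu_K^{-1}(-\alpha)$.

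The bridging equivalence $(3)\Leftrightarrow(6)$ is the analytic heart. The direction $(6)\Rightarrow(3)$ is immediate from $-\alpha=\mu_K(gz)=-\tfrac12\sum_{j\in J_z}|g_jz_j|^2\iota^\ast u^j$ with all coefficients strictly positive. For $(3)\Rightarrow(6)$, parametrize $K_\C/K\cong\inum k$ by $s\in k$ via $g=\Exp(\inum s)$ and consider the convex functional
\begin{equation*}
\Phi(s)=\tfrac14\sum_{j\in J_z}|z_j|^2 e^{-2\langle s,u^j\rangle}+\langle s,\alpha\rangle,
\end{equation*}
whose Euler--Lagrange equation is exactly $\mu_K(\Exp(\inum s)z)=-\alpha$. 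The task reduces to coercivity of $\Phi$ modulo the annihilator of $\{\iota^\ast u^j:j\in J_z\}$ (on which $\Phi$ is identically zero since any $s_0$ annihilating all $\iota^\ast u^j$ with $j\in J_z$ also pairs to zero with $\alpha$ by the representation $\alpha=\sum c_j\iota^\ast u^j$). Along any ray $s=ts_0$ outside this annihilator, either some exponential dominates (both as $t\to+\infty$ and $t\to-\infty$) when $s_0$ is mixed with respect to the signs $\langle s_0,u^j\rangle$, or, when $s_0$ lies in the dual cone $C^\vee$ (respectively $-C^\vee$), the linear term $t\langle s_0,\alpha\rangle$ provides the missing coercivity—and the Farkas-dual form of (3) is precisely the statement that $\alpha$ pairs strictly positively with every nonzero $s_0\in C^\vee$ outside the annihilator. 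The main obstacle is organizing this case analysis cleanly and handling the degenerate annihilator direction, which is the finite-dimensional pointwise analogue of Lemma~\ref{Key Lemma} in the main theorem.
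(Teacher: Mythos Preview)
Your proof is correct and organized along the same three-block skeleton as the paper (convex cone facts $\{(2),(3),(4)\}$; GIT/orbit facts $\{(1),(5)\}$; the analytic bridge $(3)\Leftrightarrow(6)$ via the convex functional, which is exactly the paper's Proposition~\ref{equivalent} and, as you note, the finite-dimensional avatar of Lemma~\ref{Key Lemma}).

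The one genuine difference is how the GIT block is tied to the rest. The paper does \emph{not} invoke Kempf--Ness as a black box for $(1)\Leftrightarrow(6)$; instead it proves $(4)\Leftrightarrow(5)$ by citing the Hilbert--Mumford-type characterization of closed torus orbits in \cite[pp.~30--31]{Nak1}, and then closes the loop via the direct analytic $(3)\Leftrightarrow(6)$. Your route---citing Kempf--Ness for $(1)\Leftrightarrow(6)$ and \emph{also} proving $(3)\Leftrightarrow(6)$ by hand---is redundant: the convex-functional argument you give for $(3)\Leftrightarrow(6)$ \emph{is} the Kempf--Ness theorem in this abelian linear setting, so once you have it you may dispense with the external citation entirely. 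Conversely, the paper's use of \cite{Nak1} for $(4)\Leftrightarrow(5)$ buys a cleaner algebraic link (one-parameter subgroups and weight cones) at the cost of an outside reference. One minor point: your $(1)\Leftrightarrow(5)$ via ``the $K_\C$-equivariant projection'' is brisker than the paper's explicit sequential argument; the paper is more careful here, tracking what happens to $\chi_\alpha(g_i)^{-1}$ along a sequence $g_i z\to z'$, and you should be prepared to supply that detail if pressed.
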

\begin{proof}
$(\ref{1})\Rightarrow (\ref{5})$ Suppose (1) holds. By the general theory of algebraic groups, there uniquely exists a closed orbit which is contained in $\overline{K_\C\cdot(z,v)}$. Let $K_\C\cdot(z^\prime, v)$ be such a closed orbit. Then by Proposition \ref{semistable}, $z^\prime\in(\C^d)^{\alpha-ss}$. We take a sequence $(g_i)_{i\in\N}$ such that 
\begin{align*}
(z^\prime, v)=\lim_{i\to\infty}g_i\cdot(z,v).
\end{align*}
Therefore we have $z^\prime=\lim_{i\to\infty}g_i\cdot z$, and thus we see $z^\prime\in\overline{K_\C\cdot z}\cap (\C^d)^{\alpha-ss}$. Then (\ref{5}) holds.

$(\ref{5})\Rightarrow (\ref{1})$ Suppose (\ref{5}) holds. Let  $z^\prime\in\overline{K_\C\cdot z}\backslash K_\C\cdot z$. We take a sequence $(g_i)_{i\in\N}$ so that $z^\prime=\lim_{i\to\infty}g_i\cdot z$. Since $K_\C\cdot (z,1)$ is closed, we have $\lim_{i\to\infty}|\chi_\alpha(g_i)^{-1}|=\infty$. This implies that $\lim_{i\to\infty}(g_i^{-1}z^\prime, \chi_{\alpha}(g_i))\in\C^d\times\{0\}$ and thus we have $z^\prime\notin(\C^d)^{\alpha-ss}$.

$(\ref{2})\Leftrightarrow (\ref{3})\Leftrightarrow (\ref{4})$ This follows from the general theory of polyhedral convex cones. See \cite{Ful1}. 

$(\ref{3})\Leftrightarrow(\ref{6})$ We shall prove this in Proposition \ref{equivalent}. 

$(\ref{4})\Leftrightarrow (\ref{5})$ See \cite[pp.30-31]{Nak1}.
\end{proof}
 The equivalence of (\ref{2}) and (\ref{3}) holds for any $\lambda\in k^\ast$:

\begin{prop}\label{equivalent}
{\it
Let $\lambda\in k^\ast$ and $z\in\C^d$. We define a functional $l_{\lambda, z}:k\rightarrow \R$ by
\begin{align*}
l_{\lambda, z}(v)\coloneqq\frac{1}{4}\sum_{j=1}^d|z_j|^2e^{2\langle u^j, v\rangle}-\langle\lambda, v\rangle.
\end{align*}
Then the following are equivalent:
\begin{enumerate}[(1)]
\item \label{equivalent 1}
$\lambda$ is in the interior of the cone generated by $(\iota^\ast u^j/2\pi)_{j\in J_z}$:
\begin{align*}
\lambda\in\sum_{j\in J_z}\R_{>0}\iota^\ast u^j;
\end{align*}
\item 
\label{equivalent 2}
The following holds:
\begin{align*}
\mu_K^{-1}(-\lambda)\cap K_\C\cdot z\neq \emptyset;
\end{align*}
\item \label{equivalent 3}$l_{\lambda, z}$ attains a minimum.
\end{enumerate}
Moreover if  $v$ and $v^\prime$ be minimizers of $l_{\lambda, z}$, then $v-v^\prime$ is in the orthogonal complement of $\sum_{j\in J_z}\R\iota^\ast u^j$.
}
\end{prop}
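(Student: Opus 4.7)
The strategy is to recognize $l_{\lambda,z}$ as a Kempf--Ness type functional whose Euler--Lagrange equation is the moment map equation, to exploit its convexity, and to establish coercivity modulo a trivial direction when (\ref{equivalent 1}) holds.

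I would first compute $\nabla l_{\lambda,z}(v)=\tfrac{1}{2}\sum_{j=1}^{d}|z_j|^{2}e^{2\langle \iota^\ast u^j,v\rangle}\iota^\ast u^j-\lambda$ and compare with $\mu_K(\Exp(-\inum v)\cdot z)=-\tfrac{1}{2}\sum_{j}|z_j|^{2}e^{2\langle \iota^\ast u^j,v\rangle}\iota^\ast u^j$, using that $|(\Exp(-\inum v)\cdot z)_j|^{2}=e^{2\langle \iota^\ast u^j,v\rangle}|z_j|^{2}$. This identifies critical points of $l_{\lambda,z}$ with elements $v\in k$ for which $\Exp(-\inum v)\cdot z\in\mu_K^{-1}(-\lambda)$; since every $K_\C$-orbit point equals $k\cdot\Exp(-\inum v)\cdot z$ for some $k\in K$ and $K$ acts trivially on $k^\ast$, this gives (\ref{equivalent 2}) $\Leftrightarrow$ (existence of a critical point). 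A second derivative computation shows $\frac{d^2}{dt^2}l_{\lambda,z}(v+tw)=\sum_j|z_j|^{2}e^{2\langle \iota^\ast u^j,v+tw\rangle}\langle \iota^\ast u^j,w\rangle^{2}\ge 0$, so $l_{\lambda,z}$ is convex; hence critical points are global minima and (\ref{equivalent 2}) $\Leftrightarrow$ (\ref{equivalent 3}). The implication (\ref{equivalent 3}) $\Rightarrow$ (\ref{equivalent 1}) then reads off the critical point equation, which expresses $\lambda$ as a combination of the $\iota^\ast u^j$, $j\in J_z$, with strictly positive coefficients $\tfrac{1}{2}|z_j|^{2}e^{2\langle \iota^\ast u^j,v\rangle}$.

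The main obstacle is the implication (\ref{equivalent 1}) $\Rightarrow$ (\ref{equivalent 3}): producing a minimizer when $\lambda=\sum_{j\in J_z}s_j\iota^\ast u^j$ with $s_j>0$. Let $V^{\circ}\subseteq k$ be the annihilator of $\sum_{j\in J_z}\R\iota^\ast u^j$. Because $|z_j|^{2}=0$ for $j\notin J_z$ and because $\langle\lambda,v\rangle=0$ for $v\in V^{\circ}$, the functional $l_{\lambda,z}$ is invariant under translation by $V^{\circ}$ and descends to $k/V^{\circ}$. I plan to prove coercivity of this descended convex function by analyzing rays $t\mapsto l_{\lambda,z}(t\hat v)$ for nonzero $\hat v\in k/V^{\circ}$: either some $\langle \iota^\ast u^j,\hat v\rangle>0$ with $j\in J_z$, in which case the exponential term $\tfrac{1}{4}|z_j|^{2}e^{2t\langle \iota^\ast u^j,\hat v\rangle}$ drives $l_{\lambda,z}(t\hat v)\to\infty$; or all such pairings are $\le 0$, in which case at least one is strictly negative (otherwise $\hat v\in V^{\circ}$), and since $s_j>0$ the linear part $-t\langle\lambda,\hat v\rangle=-t\sum_{j\in J_z}s_j\langle \iota^\ast u^j,\hat v\rangle$ forces $l_{\lambda,z}(t\hat v)\to\infty$. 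Coercivity plus convexity then yields a minimizer.

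Finally, for the uniqueness assertion, if $v$ and $v^{\prime}$ both minimize $l_{\lambda,z}$, then $l_{\lambda,z}(v+t(v^{\prime}-v))$ is constant in $t\in[0,1]$, so its second derivative vanishes. The Hessian formula gives $\sum_j|z_j|^{2}e^{2\langle \iota^\ast u^j,v\rangle}\langle \iota^\ast u^j,v^{\prime}-v\rangle^{2}=0$, forcing $\langle \iota^\ast u^j,v^{\prime}-v\rangle=0$ for every $j\in J_z$, i.e.\ $v^{\prime}-v$ lies in the orthogonal complement of $\sum_{j\in J_z}\R\iota^\ast u^j$.
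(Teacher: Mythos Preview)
Your proposal is correct and follows essentially the same approach as the paper: both identify critical points of $l_{\lambda,z}$ with solutions of the moment map equation, use convexity to pass between critical points and minima, and establish $(\ref{equivalent 1})\Rightarrow(\ref{equivalent 3})$ via a ray-by-ray coercivity argument. The only differences are cosmetic: the paper assumes for simplicity that $(\iota^\ast u^j)_{j\in J_z}$ spans $k^\ast$ and defers the coercivity step to the proof of Lemma~\ref{Key Lemma}, whereas you handle the general case by passing to the quotient $k/V^{\circ}$ and spell out the ray dichotomy explicitly.
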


\begin{proof}
We assume that $(\iota^\ast u^j)_{j\in J_z}$ generates $k^\ast$ for simplicity. 
Then a direct computation shows that $l_{\lambda, z}$ is strictly convex.  We also see that for each $v\in k$,  $v$ is a critical point of $l_{\lambda, z}$ if and only if the following holds.
\begin{align*}
\lambda=\frac{1}{2}\sum_{j=1}^de^{2\langle u^j, v\rangle}|z_j|^2\iota^\ast u^j.
\end{align*}
Therefore (\ref{equivalent 2}) and (\ref{equivalent 3}) are equivalent. Clearly, (\ref{equivalent 2}) implies (\ref{equivalent 1}). Assume that (\ref{equivalent 1}) holds. We show that (\ref{equivalent 3}) holds. From the assumption, there exists a positive numbers $(s_j)_{j\in J_z}$ such that $\lambda=\sum_{j\in J_z}s_j\iota^\ast u^j.$ Then the functional $l_{\lambda,z}$ is denoted as
\begin{align*}
l_{\lambda,z}(v)=\sum_{j\in J_z}(|z_j|^2e^{2\langle u^j,v\rangle}-s_j\langle u^j,v\rangle).
\end{align*}
This implies that $\lim_{t\to\infty}l_{\lambda,z}(tv)=\infty$ for each $v\neq0$ and thus the functional $l_{\lambda, z}$ attains a minimum.
\end{proof}
From Proposition \ref{morphism}, Proposition \ref{closed orbits} and Proposition \ref{equivalent}, we have the following:
\begin{cor}
{\it The following map is bijective:
\begin{align*}
\mu_K^{-1}(-\alpha)/K\longrightarrow(\C^d)^{\alpha-ss}/\sim.
\end{align*}
}
\end{cor}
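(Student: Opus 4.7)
The plan is to construct the map explicitly from $\mu_K^{-1}(-\alpha)/K$ to $(\C^d)^{\alpha-ss}/\sim$ and verify bijectivity by combining the three propositions in the order: existence of semistable representatives from Proposition \ref{closed orbits}, existence of closed orbits from Proposition \ref{morphism}, and uniqueness from the final statement of Proposition \ref{equivalent}.

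First I would show the map is well-defined. Given $z\in \mu_K^{-1}(-\alpha)$, by the implication $(\ref{6})\Rightarrow(\ref{1})$ of Proposition \ref{closed orbits}, $z$ is $\alpha$-semistable and $K_\C\cdot z$ is closed in $(\C^d)^{\alpha-ss}$, so $[z]\in(\C^d)^{\alpha-ss}/\sim$ makes sense. If $z,z'$ lie in the same $K$-orbit, they certainly lie in the same $K_\C$-orbit and thus in the same $\sim$-class, so the assignment $[z]_K\mapsto[z]_\sim$ descends to a map from $\mu_K^{-1}(-\alpha)/K$.

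For surjectivity, I take an equivalence class $[z_0]_\sim \in (\C^d)^{\alpha-ss}/\sim$. By Proposition \ref{morphism}, this class contains a unique $K_\C$-orbit that is closed in $(\C^d)^{\alpha-ss}$. Applying $(\ref{1})\Rightarrow(\ref{6})$ of Proposition \ref{closed orbits}, this closed orbit meets $\mu_K^{-1}(-\alpha)$, yielding a preimage in $\mu_K^{-1}(-\alpha)/K$.

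For injectivity, suppose $z,z'\in \mu_K^{-1}(-\alpha)$ satisfy $z\sim z'$. Since both orbits are closed in $(\C^d)^{\alpha-ss}$ by the well-definedness step, the condition $\overline{K_\C\cdot z}\cap\overline{K_\C\cdot z'}\cap (\C^d)^{\alpha-ss}\neq\emptyset$ forces $K_\C\cdot z=K_\C\cdot z'$ (since any point in the intersection pushes each closed orbit into the closure of the other). Thus $z'=k\cdot\Exp(\inum v)z$ for some $k\in K$ and $v\in k$. Using that $K$ is abelian so $\mu_K$ is $K$-invariant, we get $\mu_K(\Exp(\inum v)z)=\mu_K(k^{-1}z')=-\alpha=\mu_K(z)$, meaning both $0$ and $v$ are critical points (hence minimizers by convexity) of the function $l_{\alpha,z}$ from Proposition \ref{equivalent}. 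The uniqueness statement of Proposition \ref{equivalent} then forces $v$ to lie in the orthogonal complement of $\sum_{j\in J_z}\R\iota^\ast u^j$; by the definition of the $K_\C$-action in terms of the basis $(u^j)$, this orthogonality gives $\Exp(\inum v)z=z$, hence $z'\in K\cdot z$.

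The main subtlety is the injectivity step: it is precisely here that the orthogonality clause in Proposition \ref{equivalent} is essential, since otherwise different minimizers of $l_{\alpha,z}$ would produce distinct $K$-orbits in $\mu_K^{-1}(-\alpha)$ mapping to the same point of the GIT quotient. The well-definedness and surjectivity are comparatively formal, amounting to a dictionary between the moment-map condition $(\ref{6})$ and the closed-orbit condition $(\ref{1})$ of Proposition \ref{closed orbits}.
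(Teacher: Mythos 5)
Your proof is correct and follows exactly the route the paper intends: the corollary is stated there as an immediate consequence of Propositions \ref{morphism}, \ref{closed orbits} and \ref{equivalent}, and your argument simply fills in the details of that combination (well-definedness and surjectivity via the equivalence $(\ref{1})\Leftrightarrow(\ref{6})$ together with the unique closed orbit in each fiber, injectivity via the orthogonality clause of Proposition \ref{equivalent}). Nothing essential differs from the paper's approach.
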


\end{document}